\newtheorem{theorem}{Theorem}
\newtheorem{definition}{Definition}
\newtheorem{lemma}{Lemma}
\newtheorem{proposition}{Proposition}
\newtheorem{remark}{Remark}
\def\neweq#1{\begin{equation}\label{#1}}
\def\endeq{\end{equation}}
\def\eq#1{(\ref{#1})}
\def\proof{\noindent{\it Proof. }}
\def\endproof{\hfill $\Box$\par\vskip3mm}
\newcommand{\R}{\mathbb{R}}
\newcommand{\N}{\mathbb{N}}
\newcommand{\eps}{\varepsilon}
\renewcommand{\arraystretch}{1.5}
\begin{document}

\title{A qualitative explanation of the origin of torsional instability\\
in suspension bridges}

\author{Elvise BERCHIO $^\sharp$ - Filippo GAZZOLA $^\dagger$}
\date{}
\maketitle

\begin{center}
{\small $^\sharp$ Dipartimento di Scienze Matematiche-Politecnico di Torino-Corso Duca degli Abruzzi 24-10129 Torino, Italy\\
        $^\dagger$ Dipartimento di Matematica - Politecnico di Milano - Piazza Leonardo da Vinci 32 - 20133 Milano, Italy\\
{\tt elvise.berchio@polito.it, filippo.gazzola@polimi.it}}
\end{center}

\begin{abstract}
We consider a mathematical model for the study of the dynamical behavior of suspension bridges. We show that internal resonances, which depend on the bridge structure
only, are the origin of torsional instability. We obtain both theoretical and numerical estimates of the thresholds of instability. Our method is based on a finite
dimensional projection of the phase space  which reduces the stability analysis of the model to the stability of suitable Hill equations.
This gives an answer to a long-standing question about the origin of torsional instability in suspension bridges.\par\noindent
{\em Keywords: suspension bridges, torsional stability, Hill equation. }\par\noindent
{\em Mathematics Subject Classification: 37C75, 35G31, 34C15.}
\end{abstract}

\section{Introduction}

The collapse of the Tacoma Narrows Bridge, which occurred in 1940, raised many questions about the stability of suspension bridges.
In particular, since the Federal Report \cite{ammann} considers {\em the crucial event in the collapse to be the sudden change from a vertical to a
torsional mode of oscillation}, see also \cite{scott}, a natural question appears to be:
$$
\mbox{{\bf why do torsional oscillations appear suddenly in suspension bridges?}}\eqno{\rm{{\bf (Q)}}}
$$
The main purpose of the present paper is to give an answer to {\bf (Q)} by analyzing a suitable mathematical model.
We are here concerned with the main span, namely the part of the roadway between the towers, which has a rectangular shape with two long edges
(of the order of 1km) and two shorter edges (of the order of 20m) fixed and hinged between the towers. Due to the large discrepancy between these measures we model
the roadway as a degenerate plate, that is,
a beam representing the midline of the roadway with cross sections which are free to rotate around the beam. We call this model a {\em fish-bone}, see Figure \ref{bonefish}.
\begin{figure}[ht]
\begin{center}
{\includegraphics[height=22mm, width=112mm]{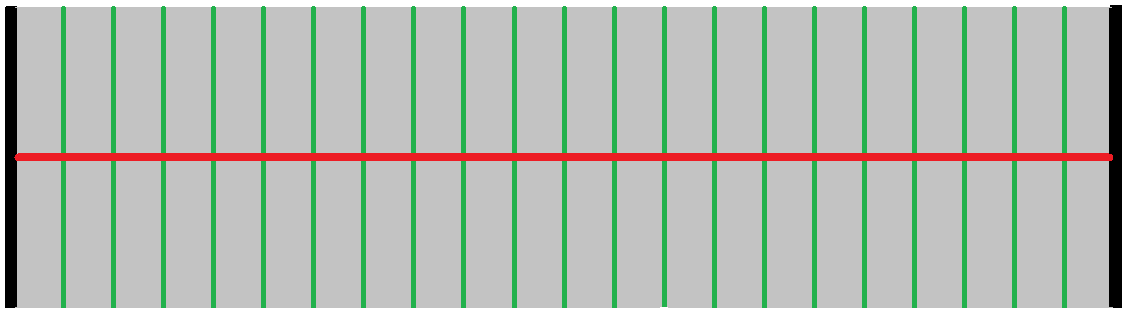}}
\caption{The model of a fish-bone plate.}\label{bonefish}
\end{center}
\end{figure}
The grey part is the roadway, the two black cross sections are between the towers, they are fixed and the plate is hinged there. The red line contains the barycenters
of the cross sections and is the line where the downwards vertical displacement $y$ is computed. The green orthogonal lines are virtual
cross sections seen as rods that can rotate around their barycenter, the angle of rotation with respect to the horizontal position being denoted by $\theta$.
We assume that the roadway has length $L$ and width $2\ell$ with $2\ell\ll L$.
The kinetic energy of a rotating object is $\frac12 J\dot{\theta}^2$, where $J$ is the moment of inertia and $\dot{\theta}$ is the angular velocity.
The moment of inertia of a rod of length $2\ell$ about the perpendicular axis through its center is given by $\frac{1}{3}M\ell^2$ where $M$ is the mass of the rod.
Hence, the kinetic energy of a rod having mass $M$ and half-length $\ell$, rotating about its center with angular velocity $\dot{\theta}$, is given by
$\frac{M}{6}\ell^2\dot{\theta}^2$. On the other hand, the bending energy of the beam depends on its curvature and this leads to a fourth order equation, see \cite{biot}.
Note that $M$ is also the mass per unit length in the longitudinal direction. The hangers are prestressed and the
equilibrium position of the midline is $y=0$, recall that $y>0$ corresponds to a downwards displacement of the midline. The equations for this system read
\begin{equation}\label{system0}
\left \{ \begin{array}{ll}
My_{tt}+EI y_{xxxx}+f(y+\ell\sin \theta)+f(y-\ell\sin \theta)=0\quad & 0<x<L\quad t>0\\
\frac{M\ell^2}{3}\theta_{tt}-\mu\ell^2\theta_{xx}+\ell\cos\theta\, (f(y+\ell\sin \theta)-f(y-\ell\sin \theta))=0\quad & 0<x<L\quad t>0,
\end{array}
\right.
\end{equation}
where $\mu>0$ is a constant depending on the shear modulus and the moment of inertia of the pure torsion, $EI>0$ is the flexural rigidity of the beam,
$f$ represents the restoring
action of the prestressed hangers and therefore also includes the action of gravity. We have not yet simplified by $\ell$ the second equation in \eq{system0} in order
to emphasize all the terms.\par
To \eq{system0} we associate the following boundary-initial conditions:
\begin{equation}\label{boundaryc}
y(0,t)=y_{xx}(0,t)=y(L,t)=y_{xx}(L,t)=\theta(0,t)=\theta(L,t)=0\qquad t\geq 0
\end{equation}
\begin{equation}\label{initial0}
y(x,0)=\eta_0(x)\,,\quad y_t(x,0)=\eta_1(x)\,,\quad\theta(x,0)=\theta_0(x)\,,\quad\theta_t(x,0)=\theta_1(x) \qquad 0<x<L\,.
\end{equation}
The first four boundary conditions in \eq{boundaryc} model a beam hinged at its endpoints whereas the last two boundary conditions model the fixed cross sections
between towers.\par
In a slightly different setting, involving mixed space-time fourth order derivatives, a {\em linear} version of \eq{system0} was first suggested by
Pittel-Yakubovich \cite{pittel}, see also \cite[Chapter VI]{yakubovich}; this model, with the addition of an external forcing representing the wind, was studied with
a parametric resonance approach and an instability was found for a sufficiently large action of the wind. This approach received severe criticisms from engineers
\cite[p.841]{scanlan}, see also \cite{billah,jenkins} for the physical point of view. The reason is that ``too much importance is attributed to the action of the wind'' as
if some kind of forced resonance would be involved. And it is clear that, in a windstorm, a precise phenomenon such as forced resonance is quite unlikely to be seen
\cite[Section 1]{mck1}. More recently, Moore \cite{moore} considered \eq{system0} with
$$f(s)=k\left[\left(s+\frac{Mg}{2k}\right)^+-\frac{Mg}{2k}\right],$$
a nonlinearity which models hangers behaving as linear springs of elastic constant $k>0$ if stretched but exert no restoring force if compressed; here $g$ is gravity. This
nonlinearity, first suggested by McKenna-Walter \cite{McKennaWalter}, describes the possible slackening of the hangers (occurring for $s\le-\frac{Mg}{2k}$) which was observed
during the Tacoma Bridge collapse, see \cite[V-12]{ammann}. But Moore considers the case where the hangers do not slacken: then $f$ becomes {\em linear}, $f(s)=ks$, and
the two equations in \eq{system0} decouple. In this situation there is obviously no interaction between vertical and torsional oscillations and, consequently,
no possibility to give an answer to {\bf (Q)}.\par
It is nowadays established that suspension bridges behave nonlinearly,
see \cite{brown,gazz,lacarbonara,McKennaWalter} and references therein. Whence, nonlinear restoring forces $f$ in \eq{system0} appear unavoidable if one wishes to have a
realistic model. A nonlinear $f$ was introduced in \eq{system0} by Holubov\'a-Matas \cite{holubova} who were able to prove well-posedness for a forced-damped version
of \eq{system0}.\par
For a slightly different model, numerical results obtained by McKenna \cite{mckmonth} show a sudden development of large torsional oscillations
as soon as the hangers lose tension, that is, as soon as the restoring force behaves nonlinearly. Further numerical results by Doole-Hogan \cite{doole}
and McKenna-Tuama \cite{mckO} show that a purely vertical periodic forcing may create a torsional response. An answer to {\bf (Q)} was recently given in
\cite{argaz} by using suitable Poincar\'e maps for a suspension bridge modeled by several coupled (second order) nonlinear oscillators. When enough energy
is present within the structure a resonance may occur, leading to an energy transfer between oscillators. The results in \cite{argaz} are, again, purely numerical.
So far no theoretical explanation of the origin of torsional oscillations has been given, nor any effective way to estimate the conditions which may create
torsional instability. This naturally leads to the following question (see \cite[Problem 7.4]{mck}): {\em can one employ the tools of nonlinear analysis to
say anything further in terms of stability}?\par
In this paper we consider the fish-bone model and we display the same phenomenon of sudden transition from purely vertical to torsional oscillations. Let us mention that a somehow related behavior of self-excited oscillations is visible in nonlinear beam equations,
see \cite{bfgk,pavani}. Here we provide a qualitative theoretical explanation of how internal resonances occur in \eq{system0}, yielding instability. Our results are purely qualitative and consider the bridge as an \emph{isolated system}, with no dissipation and no interaction with the surrounding fluid. We neglect the so-called aerodynamic forces and we focus our attention on the nonlinear structural behavior. In a forthcoming paper \cite{bergazpie} we will include aerodynamic forces and perform a more quantitative analysis, referring to actual suspension bridges.
\par
In Theorem \ref{Galerkin} we prove well-posedness of \eq{system0}-\eq{boundaryc}-\eq{initial0} for a wide class of nonlinearities $f$. The proof is based on a Galerkin
method which enables us to project \eq{system0} on a finite dimensional subspace of the phase space and to study the instability of the vertical modes
in terms of suitable Hill equations \cite{hill}.
After justifying (both physically and mathematically) this finite dimensional projection we show that it enables us to determine both
theoretical and numerical bounds for stability, see Sections \ref{finitedim}, \ref{onemode}, \ref{twomode}, and to explain the origin of torsional instability.\par
The obtained results yield the
following answer to question {\bf (Q)}. The onset of large torsional oscillations is due to a structural resonance which generates an energy transfer between different
oscillation modes. When the bridge
is oscillating vertically with sufficiently large amplitude, part of the energy is suddenly transferred to a torsional mode giving rise to wide torsional oscillations. And estimates of what is meant by ``large amplitudes'' may be obtained both theoretically and numerically.

\section{Simplification of the model and well-posedness}

It is not our purpose to give the precise quantitative behavior of the model under consideration. Therefore, in this section we make several
simplifications which do not modify the qualitative behavior of the nonlinear system \eq{system0}.\par
First of all, up to scaling we may assume that $L=\pi$; this will simplify the Fourier series expansion. Then we take
$EI\left(\frac{\pi}{L} \right)^{4}=3\mu\left(\frac{\pi}{L} \right)^{2}=1$ although these parameters may be fairly different in actual bridges.
Finally, note that the change of variable $t\mapsto \sqrt{M}t$ results in a positive or
negative delay in the occurrence of any (possibly catastrophic) phenomenon; whence, we may take $M=1$.\par
After these changes \eq{system0} becomes
\begin{equation}\label{system}
\left \{ \begin{array}{ll}
y_{tt}+ y_{xxxx}+f(y+\ell\sin \theta)+f(y-\ell\sin \theta)=0\quad & 0<x<\pi\quad t>0\\
\ell\theta_{tt}-\ell  \theta_{xx}+3\cos\theta\, (f(y+\ell\sin \theta)-f(y-\ell\sin \theta))=0\quad & 0<x<\pi\quad t>0,
\end{array}
\right.
\end{equation}
with boundary-initial conditions
\begin{equation}\label{boundary}
y(0,t)=y_{xx}(0,t)=y(\pi,t)=y_{xx}(\pi,t)=\theta(0,t)=\theta(\pi,t)=0\qquad t\geq 0
\end{equation}
\begin{equation}\label{initial}
y(x,0)=\eta_0(x)\,,\quad y_t(x,0)=\eta_1(x)\,,\quad\theta(x,0)=\theta_0(x)\,,\quad\theta_t(x,0)=\theta_1(x) \qquad 0<x<\pi\,.
\end{equation}

If $f$ is nondecreasing, as in the physical situation, then
\begin{equation}\label{F>0}
F(s):=\int_0^sf(\tau)\,d\tau\quad\mbox{is a positive convex function.}
\end{equation}
Therefore, the coercive functional (here $'=\frac{d}{dx}$)
$$J(y,\theta)=\frac{\|y''\|_2^2}{2}+\ell^2 \frac{\|\theta'\|_2^2}{6}+\int_0^{\pi}\left[F(y+\ell\sin \theta)
+F(y-\ell\sin \theta)\right]\,dx\, ,$$
defined for all $y\in H^2\cap H_0^1(0,\pi)$ and $\theta\in H_0^1(0,\pi)$,
admits a unique absolute minimum which coincides with the equilibrium $(y,\theta)=(0,0)$; here and in the sequel $\|\cdot\|_2$ denotes the
$L^2(0,\pi)$-norm.\par
We say that the functions
$$y\in C^0(\R_+;H^2\cap H_0^1(0,\pi))\cap  C^1(\R_+;L^2(0,\pi))\cap  C^2(\R_+;H^*(0,\pi))$$
$$\theta\in C^0(\R_+;H_0^1(0,\pi))\cap  C^1(\R_+;L^2(0,\pi))\cap  C^2(\R_+;H^{-1}(0,\pi))$$
are solutions of \eq{system}-\eq{boundary}-\eq{initial} if they satisfy the initial conditions \eq{initial} and if
$$\langle y_{tt}, \varphi\rangle_{H^*}+(y_{xx},\varphi'')+(f(y-\ell\sin \theta)+f(y+\ell\sin \theta),\varphi)=0\quad
\forall\,\varphi\in H^2\cap H_0^1(0,\pi)\,, \forall t>0\,,$$
$$\ell\langle \theta_{tt},\psi\rangle_{H^{-1}}+\ell(\theta_{x},\psi')+3 \cos \theta(f(y+\ell\sin \theta)-f(y-\ell\sin \theta),\psi)=0\quad \forall\,\psi\in H_0^1(0,\pi)\,, \forall t>0\,,$$
where $\langle\cdot,\cdot\rangle_{H^{-1}}$ and $\langle\cdot,\cdot\rangle_{H^*}$ are the duality pairings in $H^{-1}=(H_0^1(0,\pi))'$ and
$H^*=(H^2\cap H_0^1(0,\pi))'$ while $(\cdot,\cdot)$ denotes the scalar product in $L^2(0,\pi)$. We have

\begin{theorem}\label{Galerkin}
Let $\eta_0\in H^2 \cap H_0^1(0,\pi)$, $\theta_0\in H_0^1(0,\pi)$, $\eta_1,\theta_1\in L^2(0,\pi)$. Assume that $f\in{\rm Lip}_{loc}(\R)$ is nondecreasing, with
$f(0)=0$, and $|f(s)|\leq C(1+|s|^p)$ for every $s\in \R\setminus\{0\}$ and for some $p\geq1$. Then there exists a unique solution $(y,\theta)$ of \eqref{system}-\eqref{boundary}-\eqref{initial}.
\end{theorem}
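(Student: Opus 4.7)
The plan is a standard Faedo-Galerkin scheme built on the common orthogonal basis $\{\sin(kx)\}_{k\ge 1}$, which simultaneously diagonalizes $\partial_x^4$ under the hinged conditions (eigenvalues $k^4$) and $-\partial_x^2$ under the Dirichlet conditions (eigenvalues $k^2$). I would look for approximants $y_n(x,t)=\sum_{k=1}^n a_k(t)\sin(kx)$ and $\theta_n(x,t)=\sum_{k=1}^n b_k(t)\sin(kx)$ and project \eq{system} onto $\mathrm{span}\{\sin(jx):j\le n\}$; this produces a $2n$-dimensional second-order ODE system for the vector of coefficients $(a_k,b_k)$. Because $f\in\mathrm{Lip}_{loc}(\R)$ and $\sin,\cos$ are smooth, the right-hand side of this ODE is locally Lipschitz, so Cauchy-Lipschitz yields a unique maximal local solution.

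The key a priori estimate comes from testing the projected equations against $y_{n,t}$ and $\theta_{n,t}$, both of which belong to the Galerkin subspace and are therefore admissible. Integration by parts produces conservation of the approximate total energy
$$E_n(t):=\frac{\|y_{n,t}\|_2^2}{2}+\frac{\ell^2\|\theta_{n,t}\|_2^2}{6}+J(y_n,\theta_n).$$
By \eq{F>0} and coercivity of $J$, the identity $E_n(t)\equiv E_n(0)$ gives uniform-in-$n$ and in $t\ge 0$ bounds on $\|y_n\|_{H^2}$, $\|\theta_n\|_{H_0^1}$, $\|y_{n,t}\|_2$ and $\|\theta_{n,t}\|_2$. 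These bounds rule out blow-up of $(a_k,b_k)$, giving global existence for the Galerkin problem, and allow one to extract a subsequence with $y_n\weak^* y$ in $L^\infty(\R_+;H^2\cap H_0^1)$, $y_{n,t}\weak^* y_t$ in $L^\infty(\R_+;L^2)$, and analogously for $\theta_n$.

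To pass to the limit in the nonlinear coupling terms I would invoke the Aubin-Lions lemma on every slab $(0,\pi)\times(0,T)$, obtaining $y_n\to y$ in $C^0([0,T];H_0^1)$ and $\theta_n\to\theta$ in $C^0([0,T];L^2)$, and consequently a.e. convergence. Because $H_0^1(0,\pi)\embed L^\infty(0,\pi)$ in one space dimension, the approximants are uniformly bounded in $L^\infty((0,\pi)\times(0,T))$, so the growth bound $|f(s)|\le C(1+|s|^p)$ combined with the local Lipschitz continuity of $f$ and the trigonometric functions yields strong $L^2$-convergence of $f(y_n\pm\ell\sin\theta_n)$ to $f(y\pm\ell\sin\theta)$. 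This is enough to pass to the limit in every term of the weak formulation and to identify the limit as a solution in the required class; continuity in time in the spaces appearing in the statement follows by combining weak continuity of the limit with the limiting energy identity, and then reading off $y_{tt}\in C^0(H^*)$ and $\theta_{tt}\in C^0(H^{-1})$ directly from the equations.

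Uniqueness is proved by subtracting the equations satisfied by two solutions $(y^i,\theta^i)$, $i=1,2$, and formally testing against $(y^1_t-y^2_t,\theta^1_t-\theta^2_t)$. The quadratic terms reproduce the energy of the difference, while the nonlinear difference is controlled by the local Lipschitz constants of $f$, $\sin$ and $\cos$, evaluated on the uniformly $L^\infty$-bounded range of the two solutions. Grönwall's lemma then forces the two solutions to coincide. The main technical obstacle here is the lack of regularity $y_{tt},\theta_{tt}\in L^2$: one cannot legitimately test against $y_t,\theta_t$ in the limit equation. I would overcome this by first deriving the energy difference identity at the Galerkin level, where all quantities are smooth, and then passing to the limit (or, equivalently, mollifying in time) to obtain the needed inequality in the limit framework; this is the subtlest part of the argument.
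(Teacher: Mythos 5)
Your overall strategy coincides with the paper's: Galerkin approximation on the basis $\{\sin(jx)\}$, conservation of the approximate energy to get uniform bounds and global existence of the approximants, compactness to pass to the limit in the nonlinearity (using $H^1_0(0,\pi)\embed L^\infty(0,\pi)$ exactly as you do), and a Gronwall argument for uniqueness. Two technical points differ. For the limit passage, you extract a weak-* convergent subsequence and use Aubin--Lions, then recover time-continuity of the limit from weak continuity plus the energy identity; the paper instead shows that $\{y^n\}$ and $\{\theta^n\}$ are \emph{Cauchy} in $C^0([0,T];H^2\cap H_0^1)\cap C^1([0,T];L^2)$, by writing the energy identity for the difference $y^n-y^m$, $\theta^n-\theta^m$ and controlling the potential terms through the already-established strong $C^0([0,T];L^2)$ convergence (Ascoli--Arzel\`a) and uniform $L^\infty$ bounds. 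The paper's route delivers the strong $C^0$-in-time regularity of the statement directly and avoids the delicate ``weak continuity plus energy conservation implies strong continuity'' step, which in your version would itself require justifying the energy \emph{identity} (not merely an inequality) for the limit.

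The genuine weak point is your repair of the uniqueness argument. You correctly identify that one cannot test the difference of two solutions against $(y^1_t-y^2_t,\theta^1_t-\theta^2_t)$ at the available regularity, but your first proposed fix --- deriving the energy difference identity ``at the Galerkin level'' --- does not apply: uniqueness concerns two \emph{arbitrary} solutions of \eq{system}-\eq{boundary}-\eq{initial}, neither of which is a Galerkin approximant, so there is no finite-dimensional level at which to subtract them. Mollification in time could work but is left entirely unexecuted, and it is precisely where the difficulty lies. The paper resolves this with the classical device of testing against the time integrals $\mu_s(t)=-\int_t^s\bar y(\tau)\,d\tau$ and $\eta_s(t)=-\int_t^s\bar\theta(\tau)\,d\tau$, which are admissible test functions in $H^2\cap H_0^1$ and $H_0^1$ for each fixed $t$; this produces an identity involving only $\|\bar y(s)\|_2$, $\|\bar\theta(s)\|_2$ and the integrated quantities $\|Y_{xx}(s)\|_2$, $\|\Theta_x(s)\|_2$, to which the local Lipschitz bound on $f$ and Gronwall's lemma apply. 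You should replace your sketch with this (or an equivalent duality/mollification argument carried out in full); as written, the uniqueness part of your proof is not complete.
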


The proof of Theorem \ref{Galerkin} is essentially due to \cite[Theorems 8 and 11]{holubova}. For the sake of completeness and since we require additional regularity for the solution, we quote a sketch of its proof in Section \ref{proofG}. It is based on a Galerkin procedure which suggests to approximate \eqref{system} with a finite dimensional system. In the next section, we study in some detail these approximate systems.

\section{Finite dimensional torsional stability}\label{finitedim}

\subsection{Dropping the trigonometric functions}\label{drop}

Since we are willing to describe how small torsional oscillations may suddenly become larger ones, we can use the following approximations:
\neweq{quasi}
\cos\theta \cong 1 \quad \text{and} \quad \sin \theta \cong \theta\,.
\endeq
This statement requires a rigorous justification.
It is known from the Report \cite[p.59]{ammann} that the torsional angle of the Tacoma Narrows Bridge prior to its collapse grew up until $45^\circ$.
On the other hand, Scanlan-Tomko \cite[p.1723]{scantom} judge that the torsional angle
can be considered harmless provided that it remains smaller than $3^\circ$. In radians this means that
\neweq{sentenza}
\mbox{the torsional angle may grow up until }\frac{\pi}{4}\mbox{ and may be considered harmless until }\frac{\pi}{60}\, .
\endeq

By the Taylor expansion with the Lagrange remainder term, we know that
\neweq{taylor}
\sin\eps=\sum_{k=0}^n(-1)^k\frac{\eps^{2k+1}}{(2k+1)!}+(-1)^{2n+3}\cos(\eps_\sigma)\frac{\eps^{2n+3}}{(2n+3)!}
:=P(\eps,n)+\Gamma_s(\eps,n)\qquad\forall\eps\in\R
\endeq
where $|\eps_\sigma|<|\eps|$ while $P$ and $\Gamma_s$ represent, respectively, the approximating polynomial and the approximating error. We have that
$$
P\left(\frac{\pi}{60},0\right)=\frac{\pi}{60}\, ,\quad
P\left(\frac{\pi}{60},1\right)=\frac{\pi}{60}-\frac{\pi^3}{1296}\cdot10^{-3}\, ,\quad
P\left(\frac{\pi}{4},0\right)=\frac{\pi}{4}\, ,\quad
P\left(\frac{\pi}{4},1\right)=\frac{\pi}{4}-\frac{\pi^3}{384}\, ,
$$
while we know that
$$\sin\frac{\pi}{60}=\sin\left(\frac{\pi}{10}-\frac{\pi}{12}\right)=
\frac{(\sqrt5 -1)(\sqrt6 +\sqrt2 )-(\sqrt6 -\sqrt2 )\sqrt{10+2\sqrt5}}{16}\approx0.0523
\, ,\quad\sin\frac{\pi}{4}=\frac{1}{\sqrt2}\, .$$
Therefore, the relative error $R_s(\eps,n):=|\frac{\sin\eps-P(\eps,n)}{\sin\eps}|$ (or percentage error) is given by
$$
R_s\left(\frac{\pi}{60},0\right)\approx4.6\cdot10^{-4}\, ,\quad R_s\left(\frac{\pi}{60},1\right)\approx6.3\cdot10^{-8}\, ,\quad
R_s\left(\frac{\pi}{4},0\right)\approx0.11\, ,\quad R_s\left(\frac{\pi}{4},1\right)\approx3.5\cdot10^{-3}\, .
$$

Similarly, we proceed with the cosine function. The Taylor expansion yields
$$
\cos\eps=\sum_{k=0}^n(-1)^k\frac{\eps^{2k}}{(2k)!}+(-1)^{2n+2}\sin(\eps_\sigma)\frac{\eps^{2n+2}}{(2n+2)!}
:=Q(\eps,n)+\Gamma_c(\eps,n)\qquad\forall\eps\in\R\, .
$$
We have that
$$
Q\left(\frac{\pi}{60},0\right)=1\, ,\quad
Q\left(\frac{\pi}{60},1\right)=1-\frac{\pi^2}{7200}\, ,\quad
Q\left(\frac{\pi}{4},0\right)=1\, ,\quad
Q\left(\frac{\pi}{4},1\right)=1-\frac{\pi^2}{32}\, ,
$$
while we also know that
$$\cos\frac{\pi}{60}\approx0.999\, ,\quad\cos\frac{\pi}{4}=\frac{1}{\sqrt2}\, .$$
Therefore, the relative error $R_c(\eps,n):=|\frac{\cos\eps-P(\eps,n)}{\cos\eps}|$ is given by
$$
R_c\left(\frac{\pi}{60},0\right)\approx1.4\cdot10^{-3}\, ,\quad R_c\left(\frac{\pi}{60},1\right)\approx3.1\cdot10^{-7}\, ,\quad
R_c\left(\frac{\pi}{4},0\right)\approx0.41\, ,\quad R_c\left(\frac{\pi}{4},1\right)\approx2.2\cdot10^{-2}\, .
$$

The above results enable us to draw the following conclusions, which we collect in a proposition.

\begin{proposition}\label{propapprox}\ \par\noindent
$\bullet$ If the model allows torsional angles up to $\frac{\pi}{4}$, then the approximation \eqref{quasi} is incorrect, yielding large relative
errors ($41\%$ for the cosine and $11\%$ for the sine); a second order approximation still yields fairly large relative errors ($2.2\%$ for the
cosine and $0.4\%$ for the sine).\par\noindent
$\bullet$ If the model allows torsional angles up to $\frac{\pi}{60}$, the approximation \eqref{quasi} is quite accurate, yielding small relative errors
($0.14\%$ for the cosine and less than $0.05\%$ for the sine); a second order approximation will not improve significantly the precision of the model.\par
\end{proposition}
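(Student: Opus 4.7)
The plan is to observe that the statement is essentially a bookkeeping corollary of the numerical data assembled in the paragraphs immediately preceding it: both bullets follow at once by comparing, at $\eps=\pi/4$ and $\eps=\pi/60$, the relative errors $R_s(\eps,n)$ and $R_c(\eps,n)$ for $n=0$ (the approximation \eq{quasi}) against $n=1$ (the next-order correction). No new analytic input is required; the argument is a side-by-side tabulation.

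First I would recall the Taylor expansions with Lagrange remainder \eq{taylor} and its cosine analogue, which produce the polynomial approximants $P(\eps,n)$ and $Q(\eps,n)$ together with the explicit remainder estimates $|\Gamma_s(\eps,n)|\le \eps^{2n+3}/(2n+3)!$ and $|\Gamma_c(\eps,n)|\le \eps^{2n+2}/(2n+2)!$. Combined with the known exact values of $\sin(\pi/60)$, $\cos(\pi/60)$, $\sin(\pi/4)$, $\cos(\pi/4)$ (the first two to sufficient decimal precision, the last two in closed form), these yield $R_s$ and $R_c$ by direct evaluation.

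For the first bullet ($\eps=\pi/4$) I would read off $R_c(\pi/4,0)\approx 0.41$ and $R_s(\pi/4,0)\approx 0.11$, which are the quoted $41\%$ and $11\%$ errors and show that \eq{quasi} is inadequate at this amplitude; the corresponding $n=1$ values $R_c(\pi/4,1)\approx 2.2\cdot 10^{-2}$ and $R_s(\pi/4,1)\approx 3.5\cdot 10^{-3}$ confirm that a second-order expansion still carries a percent-level error in the cosine term. For the second bullet ($\eps=\pi/60$) the $n=0$ errors $R_c(\pi/60,0)\approx 1.4\cdot 10^{-3}$ and $R_s(\pi/60,0)\approx 4.6\cdot 10^{-4}$ yield the announced $0.14\%$ and sub-$0.05\%$ figures, while the $n=1$ errors $R_c(\pi/60,1)\approx 3.1\cdot 10^{-7}$ and $R_s(\pi/60,1)\approx 6.3\cdot 10^{-8}$ are many orders of magnitude smaller.

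There is no genuine analytic obstacle; the only step that is not purely mechanical is defending the final clause of the second bullet, namely that improving from $10^{-3}$ to $10^{-7}$ is not a \emph{significant} refinement of the model. I would argue this on modeling grounds: the other idealizations already built into \eq{system} (rigid cross-sections, uniform hangers modeled through a single nonlinearity $f$, omission of aerodynamic and dissipative effects, the rescalings $L=\pi$ and $M=1$, and so on) introduce relative errors well above $10^{-3}$, so any gain achieved by passing from first- to second-order Taylor truncation at $\eps=\pi/60$ is masked by those larger sources of error and does not justify the added algebraic complication in the subsequent analysis.
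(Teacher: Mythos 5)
Your proposal matches the paper's own treatment: the proposition is explicitly presented as a collection of conclusions drawn from the relative-error computations $R_s(\eps,n)$ and $R_c(\eps,n)$ carried out in the paragraphs immediately preceding it, so the argument is precisely the side-by-side tabulation you describe. Your added modeling justification for the final clause of the second bullet is a sensible gloss, consistent in spirit with the paper though not spelled out there.
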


Since the purpose of our numerical results is to consider small torsional data, of the order of $10^{-4}$, and since our purpose is merely to detect
when the torsional angle $\theta$ increases of two orders of magnitude, thereby reaching at most $10^{-2}\ll\frac{\pi}{60}$, we can make use of
the approximation \eq{quasi}. We emphasize that our results do not aim to describe the behavior of the bridge when the torsional angle becomes large,
they just aim to describe how a small torsional angle ceases to be small.

Proposition \ref{propapprox} allows us to implement the approximation suggested by \eq{quasi}; we set $z:=\ell \theta$ so that \eq{system} becomes
\begin{equation}\label{system A}
\left \{ \begin{array}{ll}
y_{tt}+y_{xxxx}+f(y+z)+f(y-z)=0\qquad & (0<x<\pi,\ t\ge0)\\
z_{tt}-z_{xx}+3f(y+z)-3f(y-z)=0\qquad & (0<x<\pi,\ t\ge0)\, .
\end{array}\right.
\end{equation}
In \eq{system A} the dependence on the width $\ell$ is somehow hidden; to recover this dependence, note that $\theta=\frac{z}{\ell}$ so that smaller $\ell$ yield larger $\theta$, that is, less stability.

\subsection{Choosing the nonlinearity}\label{gammano1}

We consider a specific nonlinearity $f$ satisfying the assumptions of Theorem \ref{Galerkin}. Since our purpose is merely to
describe the qualitative phenomenon, the choice of the nonlinearity is not of fundamental importance;
it is shown in \cite{argaz} that several different nonlinearities yield the same qualitative behavior for the solutions. We take
\neweq{fgamma}
f(s)=s+\gamma s^3\quad \text{for } \gamma>0\, ,
\endeq
which allows to simplify several computations. Let us also mention that Plaut-Davis \cite[Section 3.5]{plautdavis} make the same choice and that this
nonlinearity appears in several elastic contexts, see e.g.\ \cite[(1)]{ivanov}.\par
The parameter $\gamma$ measures how far is $f$ from a linear function. When $f$ is as in \eq{fgamma}, the system \eq{system A} becomes
\begin{equation}\label{systemgamma}
\left \{ \begin{array}{ll}
y_{tt}+y_{xxxx}+2y(1+\gamma y^2+3\gamma z^2)=0\qquad & (0<x<\pi,\ t\ge0)\\
z_{tt}-z_{xx}+6z(1+3\gamma y^2+\gamma z^2)=0\qquad & (0<x<\pi,\ t\ge0)\, .
\end{array}\right.
\end{equation}

To \eq{systemgamma} we associate some initial conditions which determine the conserved energy of the system, that is,
\begin{eqnarray*}
E_\gamma &=& \frac{\|y_t(t)\|_2^2}{2}+\frac{\|z_t(t)\|_2^2}{6}+\frac{\|y_{xx}(t)\|_2^2}{2}+\frac{\|z_x(t)\|_2^2}{6}\\
\ & \ & +\int_0^\pi \Big(y(x,t)^2+z(x,t)^2+3\gamma z(x,t)^2y(x,t)^2+\gamma\frac{y(x,t)^4}{2}+\gamma\frac{z(x,t)^4}{2} \Big)\, dx\, .
\end{eqnarray*}

Let $(y_\gamma,z_\gamma)$ be the solution of \eq{systemgamma} with some initial conditions. If we put
$(\overline{y},\overline{z})=\sqrt{\gamma}\,(y_{\gamma},y_{\gamma})$, then $(\overline{y},\overline{z})$ solves system \eq{systemgamma} when $\gamma=1$.
Accordingly, the conserved energy is modified:

\begin{proposition}\label{scaling}
Let $\gamma>0$. The conserved energy of \eqref{systemgamma} satisfies $E_\gamma=E_1/\gamma$, where $E_1$ is the conserved energy of \eqref{systemgamma}
when $\gamma=1$. Moreover, the widest vertical amplitude $\|y_\gamma\|_\infty$ satisfies $\|y_\gamma\|_\infty=\|\overline{y}\|_\infty/\sqrt{\gamma}$.
\end{proposition}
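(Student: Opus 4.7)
The plan is a direct scaling calculation built on the change of variables $(\bar y,\bar z)=\sqrt{\gamma}\,(y_\gamma,z_\gamma)$ announced right before the statement. First I would verify that this rescaling really does turn system \eqref{systemgamma} into the same system with $\gamma=1$. Since time and space derivatives commute with the constant multiplication by $\sqrt{\gamma}$, the linear terms $\bar y_{tt}+\bar y_{xxxx}$ and $\bar z_{tt}-\bar z_{xx}$ just acquire an overall factor $\sqrt{\gamma}$; the nonlinear couplings $2y_\gamma(1+\gamma y_\gamma^2+3\gamma z_\gamma^2)$ and $6z_\gamma(1+3\gamma y_\gamma^2+\gamma z_\gamma^2)$, when rewritten through $y_\gamma=\bar y/\sqrt{\gamma}$ and $z_\gamma=\bar z/\sqrt{\gamma}$, produce $2\bar y(1+\bar y^2+3\bar z^2)/\sqrt{\gamma}$ and $6\bar z(1+3\bar y^2+\bar z^2)/\sqrt{\gamma}$. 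Hence dividing the transformed equations by $\sqrt{\gamma}$ yields precisely \eqref{systemgamma} with $\gamma=1$, confirming that $(\bar y,\bar z)$ is the solution advertised in the text.

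Next I would substitute $y_\gamma=\bar y/\sqrt{\gamma}$, $z_\gamma=\bar z/\sqrt{\gamma}$ into the conserved energy of \eqref{systemgamma}. Each of the quadratic quantities $\|y_t\|_2^2$, $\|z_t\|_2^2$, $\|y_{xx}\|_2^2$, $\|z_x\|_2^2$, $\int y^2$ and $\int z^2$ scales by $1/\gamma$. For the quartic terms a weight $\gamma$ sits in front and the fourth power scales by $1/\gamma^2$, so $\gamma\int y_\gamma^4/2=\int\bar y^4/(2\gamma)$, and similarly for $\gamma\int z_\gamma^4/2$ and for the mixed term $3\gamma\int z_\gamma^2 y_\gamma^2=3\int\bar z^2\bar y^2/\gamma$. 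Thus every summand in $E_\gamma$ carries the common factor $1/\gamma$, and recognizing what remains as the energy of the rescaled system at $\gamma=1$ gives $E_\gamma=E_1/\gamma$.

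Finally, the amplitude identity is an immediate consequence of the change of variables: $\|y_\gamma\|_\infty=\|\bar y/\sqrt{\gamma}\|_\infty=\|\bar y\|_\infty/\sqrt{\gamma}$. I should also remark that, because the rescaling is invertible and takes solutions of \eqref{systemgamma} with parameter $\gamma$ to solutions of the same system with parameter $1$, energy conservation for $(\bar y,\bar z)$ (which is part of the standard output of the Galerkin construction behind Theorem \ref{Galerkin}) transfers to $(y_\gamma,z_\gamma)$, so no independent conservation argument is needed.

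There is no real obstacle here: the statement is a normalization lemma whose role is to justify later reducing the numerical experiments to the case $\gamma=1$. The only point requiring a little care is bookkeeping of the factors $\sqrt{\gamma}$ and $\gamma$ across the six quadratic and three quartic contributions to $E_\gamma$, to confirm that the overall prefactor is uniformly $1/\gamma$.
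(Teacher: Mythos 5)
Your proposal is correct and follows exactly the route the paper has in mind: the text states only that ``the proof follows by rescaling,'' and you have simply carried out that rescaling explicitly, verifying that $(\overline{y},\overline{z})=\sqrt{\gamma}\,(y_\gamma,z_\gamma)$ maps solutions of \eqref{systemgamma} to solutions with $\gamma=1$ and tracking the uniform factor $1/\gamma$ through every term of $E_\gamma$. (You also silently corrected the obvious typo in the paper's statement of the change of variables, which writes $(y_\gamma,y_\gamma)$ where $(y_\gamma,z_\gamma)$ is meant.)
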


The proof of Proposition \ref{scaling} follows by rescaling. Proposition \ref{scaling} enables us to restrict our attention to the case $\gamma=1$, that is,
\begin{equation}\label{cubica}
f(s)=s+s^3\, .
\end{equation}
In this case we have
\begin{equation}\label{Gi2}
f(y+z)+f(y-z)=2y(1+y^2+3z^2)\quad \text{ and }\quad f(y+z)-f(y-z)=2z(1+3y^2+z^2)
\end{equation}
so that \eq{system} reduces to
\neweq{fcon1}
\left \{ \begin{array}{ll}
y_{tt}+y_{xxxx}+2y(1+y^2+3z^2)=0\qquad & (0<x<\pi,\ t\ge0)\\
z_{tt}-z_{xx}+6z(1+3y^2+z^2)=0\qquad & (0<x<\pi,\ t\ge0)\, .
\end{array}
\right.\end{equation}

Moreover, the conserved energy is given by
\begin{eqnarray}
E &=& \frac{\|y_t(t)\|_2^2}{2}+\frac{\|z_t(t)\|_2^2}{6}+\frac{\|y_{xx}(t)\|_2^2}{2}+\frac{\|z_x(t)\|_2^2}{6} \notag\\
\ & \ & +\int_0^\pi \left[\frac{y(x,t)^4}{2}+\frac{z(x,t)^4}{2}+3z(x,t)^2y(x,t)^2+y(x,t)^2+z(x,t)^2 \right]\, dx\, .\label{Entot}
\end{eqnarray}

Our purpose is to determine energy thresholds for torsional stability of vertical modes (still to be rigorously defined), see Section \ref{torstab}. From
Proposition \ref{scaling} we see that
$$\gamma\mapsto E_\gamma\qquad\mbox{and}\qquad\gamma\mapsto\|y_\gamma\|_\infty$$
are decreasing with respect to $\gamma$ and both tend to 0 if $\gamma\to\infty$, whereas they tend to $\infty$ if $\gamma\to0$. This shows that the
nonlinearity plays against stability:
\begin{center}
\begin{minipage}{165mm}
{\em more nonlinearity yields more instability and almost linear elastic behaviors are extremely stable.}
\end{minipage}
\end{center}

\subsection{Why can we neglect high torsional modes?}

Our finite dimensional analysis is performed on the low modes. This procedure is motivated by classical engineering literature.
Bleich-McCullough-Rosecrans-Vincent \cite[p.23]{bleich} write that {\em out of the infinite number of possible modes of motion in which a suspension bridge
might vibrate, we are interested only in a few, to wit: the ones having the smaller numbers of loops or half waves}. The physical reason why only low modes should
be considered is that higher modes require large bending energy; this is well explained by Smith-Vincent \cite[p.11]{tac2} who write that {\em the higher modes with
their shorter waves involve sharper curvature in the truss and, therefore, greater bending moment at a given amplitude and accordingly reflect the influence of the
truss stiffness to a greater degree than do the lower modes}. The suggestion to restrict attention to lower modes, mathematically corresponds to project an infinite
dimensional phase space on a finite dimensional subspace, a technique which should be attributed to Galerkin \cite{galer}.\par
Consider the solution $(y,z)$ of \eqref{fcon1}-\eqref{boundary}-\eqref{initial}, as given by a straightforward variant of Theorem \ref{Galerkin},
and let us expand it in Fourier series with respect to $x$:
\neweq{fourier}
y(x,t)=\sum_{j=1}^\infty y_j(t)\sin(jx)\ ,\quad z(x,t)=\sum_{j=1}^\infty z_j(t)\sin(jx)\ ,
\endeq
where the functions $y_j$ and $z_j$ are the unknowns. Denote by $z^m$ the projection of $z$ on the space spanned by $\{\sin(x),...,\sin(mx)\}$
and by $w^m$ the projection of $z$ on the infinite dimensional space spanned by $\{\sin((m+1)x),\sin((m+2)x)...\}$:
\neweq{split}
z(x,t)=z^m(x,t)+w^m(x,t)\, ,\quad z^m(x,t)=\sum_{j=1}^m z_j(t)\sin(jx)\, ,\quad w^m(x,t)=\sum_{j=m+1}^\infty z_j(t)\sin(jx)\, .
\endeq

In view of \eq{sentenza}, the next definition appears necessary: it characterizes solutions with small high torsional modes.

\begin{definition}\label{negligible}
Let $\omega>0$ and let $z\in C^0(\R_+;H_0^1(0,\pi))$. Let \eqref{split} be the decomposition $z$. We say that $z$ is
{\bf $\omega$-negligible above the $m$-th mode} if
$$\|w^m\|_\infty<\omega$$
where the $L^\infty$-norm is taken for $(x,t)\in(0,\pi)\times(0,+\infty)$.
\end{definition}

The choice of $\omega$ depends both on $\ell$ (through the substitution $z=\ell \theta$) and on the harmless criterion \eq{sentenza}, see Section \ref{drop}.
Nevertheless, since the purpose of the present paper is merely to give a qualitative description of the phenomena and of the corresponding procedures, we will
not quantify its value. In Section \ref{proofepsneg} we prove the following sufficient condition for a solution to be torsionally $\omega$-negligible on higher
modes.

\begin{theorem}\label{epsneg}
Let $\omega>0$ and let $(y,z)$ be a solution of \eqref{fcon1}-\eqref{boundary}-\eqref{initial} having energy $E>0$. Then
the torsional component $z$ is $\omega$-negligible above the $m$-th mode provided that at least one of the following inequalities holds
\neweq{prima}
\pi\, \omega^4\, (m+1)^2\, \Big[\pi(m^2+2m+7)^2+36E\Big]-36\, \pi^2\, E^2\, (m+1)^4-9\omega^8\ge0
\endeq
\neweq{seconda}
E^3+\frac{\pi}{2}\, E^2-\frac{3\, \omega^4}{4}\, E-\frac{3\, \omega^8}{32\, \pi}-\frac{\pi\, \omega^4}{3}\le0\, .
\endeq
\end{theorem}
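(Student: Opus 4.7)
The plan is to derive two independent a priori upper bounds for $\|w^m\|_\infty := \|w^m\|_{L^\infty((0,\pi)\times\mathbb{R}_+)}$ using the conserved energy \eq{Entot} alone, and then to rearrange each bound into one of the sufficient conditions \eq{prima}, \eq{seconda}. Since the statement applies to any solution of total energy $E$ (with no assumption on how the energy is distributed among modes), every ingredient must be a uniform-in-time consequence of energy conservation. I begin by recording the pointwise-in-time estimates that follow from the nonnegativity of every summand in \eq{Entot}: for every $t\ge 0$,
\[
\|z(\cdot,t)\|_2^2\le E,\quad \|z_x(\cdot,t)\|_2^2\le 6E,\quad \|z(\cdot,t)\|_4^4\le 2E,\quad \int_0^\pi y^2z^2\,dx\le \tfrac{E}{3},
\]
plus the analogues for $y$. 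Expanding $z(x,t)=\sum_{j\ge 1}z_j(t)\sin(jx)$ in the Dirichlet basis and applying Parseval gives $\|w^m\|_2^2=(\pi/2)\sum_{j>m}z_j^2$ and $\|w^m_x\|_2^2=(\pi/2)\sum_{j>m}j^2 z_j^2$, together with the high-mode Poincar\'e inequality $(m+1)^2\|w^m\|_2^2\le \|w^m_x\|_2^2\le 6E$.

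For \eq{prima} I would exploit the high-frequency structure of $w^m$. The linear part of the $z$-equation has the form $z_{tt}+Lz=-F$ with $L:=-\partial_x^2+6$ and $F:=18y^2z+6z^3$; the operator $L$ has eigenvalues $\lambda_j=j^2+6$ on the $\sin(jx)$ basis, so $\lambda_{m+1}=m^2+2m+7$, which accounts for the factor $(m^2+2m+7)^2$ appearing in \eq{prima}. Applying Cauchy--Schwarz mode by mode,
\[
\|w^m\|_\infty^2\le \Big(\sum_{j>m}|z_j|\Big)^2\le \Big(\sum_{j>m}\lambda_j^{-2}\Big)\Big(\sum_{j>m}\lambda_j^2 z_j^2\Big),
\]
the first factor is bounded by an expression of order $\lambda_{m+1}^{-2}$, while the second equals $(2/\pi)\|Lw^m\|_2^2$ and is estimated by combining the projected equation $w^m_{tt}+Lw^m=-P_m^\perp F$ with the pointwise control $\|F\|_2\lesssim E^{3/2}$ inherited from the $L^\infty$-bounds $\|y\|_\infty,\|z\|_\infty\lesssim E^{1/2}$ on top of the $L^2$-bounds above. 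The $36E$-contribution in \eq{prima} comes from the energy controls on $\|w^m_x\|_2^2$ and on the projected nonlinear source, while the $-36\pi^2 E^2(m+1)^4$-correction arises when $(m+1)^2\|w^m\|_2$ is traded against $\|w^m_x\|_2\le\sqrt{6E}$. Imposing $\|w^m\|_\infty<\omega$ and clearing denominators then produces the polynomial form \eq{prima}.

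For \eq{seconda}, which does not involve $m$, I would not separate modes but instead apply the 1D Sobolev bound $\|w^m\|_\infty^4\le 4\|w^m\|_2^2\|w^m_x\|_2^2$ together with the full energy controls $\|w^m\|_2^2\le E$, $\|w^m_x\|_2^2\le 6E$, and the extra $L^4$-bound $\|z\|_4^4\le 2E$ together with the mixed estimate $\int y^2z^2\le E/3$. Young-type inequalities then collect these pieces into a cubic polynomial in $E$ whose rearrangement is exactly \eq{seconda}: the $E^3$ term comes from chaining an $L^2\cdot L^2$-product with the additional $E$ factor supplied by the $L^4$-control, and the $-\pi\omega^4/3$ term reflects the mixed interaction $\int y^2z^2$.

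The hard part will be the argument for \eq{prima}: the energy gives no direct static control of $\|z_{xx}\|_2$ or $\|z_{tt}\|_2$, so the bound on $\|Lw^m\|_2^2$ must be obtained indirectly, via the projected evolution equation and the pointwise $L^2$-bound on $P_m^\perp F$. One has to balance the Cauchy--Schwarz exponents so that the high-frequency gain $\lambda_{m+1}^{-2}$ is not eaten up by the coarser energy controls and so that the final inequality collapses into the compact polynomial form \eq{prima}. Keeping every constant explicit in this balancing — and not overshooting the cubic structure in the \eq{seconda} argument — is the central technical point of the proof.
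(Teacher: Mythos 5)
Your proposal has a genuine gap, and the gap is precisely where you locate the ``hard part.'' The paper never touches the evolution equation for $w^m$: both \eqref{prima} and \eqref{seconda} are extracted from the conserved energy alone. The mechanism is a single interpolation inequality $\|w^m\|_\infty^4\le\|w^m\|_2^2\,\|w^m_x\|_2^2$ (Lemma \ref{symm}, a Gagliardo--Nirenberg bound proved by symmetrization), combined with the improved Poincar\'e inequality $\|w^m\|_2^2\le\|w^m_x\|_2^2/(m+1)^2$ and the energy lower bound $E>\tfrac16\|w^m_x\|_2^2+\|w^m\|_2^2+\tfrac{1}{2\pi}\|w^m\|_2^4$. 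Setting $x=\|w^m_x\|_2^2$, $y=\|w^m\|_2^2$, one then maximizes the product $xy$ over the planar region cut out by these two constraints (Lemma \ref{maxcalc}); the maximum sits either at the corner where the line $y=dx$ meets the parabola, which yields $K_1$ and hence \eqref{prima}, or at a tangency of the hyperbola $xy=k$ with the parabola, which yields $K_2$ and hence \eqref{seconda}. In particular the two conditions are the two branches of one optimization, not two separate estimates, and the factor $m^2+2m+7$ in \eqref{prima} arises as $6(m+1)^2(a+d)$ with $a=\tfrac16$, $d=(m+1)^{-2}$ --- not from the eigenvalues of $-\partial_x^2+6$.

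Your route to \eqref{prima} cannot be completed as sketched. From the projected equation $w^m_{tt}+Lw^m=-P_m^\perp F$ you would need a uniform-in-time bound on $\|w^m_{tt}\|_2$ to control $\|Lw^m\|_2$, and the energy supplies none: it controls $\|z_t\|_2$ and $\|z_x\|_2$, i.e.\ one power of $\lambda_j$ per mode, whereas $\sum_{j>m}\lambda_j^2z_j^2$ requires two. (Also $\sum_{j>m}\lambda_j^{-2}$ is of order $m^{-3}$, not $\lambda_{m+1}^{-2}\sim m^{-4}$, so even the frequency gain is overestimated.) For \eqref{seconda}, simply inserting $\|w^m\|_2^2\le E$ and $\|w^m_x\|_2^2\le 6E$ into the Sobolev bound produces a bound quadratic in $E$; the cubic $E^3$ in \eqref{seconda} comes from squaring the $(1+\tfrac{3E}{2\pi})^{3/2}$ produced by the tangency computation, i.e.\ from genuinely exploiting the quartic term $\|w^m\|_2^4/2\pi$ in the energy, which your Young-inequality sketch does not reproduce. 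The missing idea, in short, is the constrained two-variable maximization of $\|w^m\|_2^2\,\|w^m_x\|_2^2$; once you have that, no PDE estimate beyond energy conservation is needed.
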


The two inequalities \eq{prima} and \eq{seconda} have a completely different meaning. The condition \eq{seconda} is somehow obvious and uninteresting:
it states that if the total energy $E$ is sufficiently small then all the torsional components are small. In the next table we give some numerical bounds
for $E$ in dependence of the maximum allowed amplitude $\omega$.

\begin{center}
\begin{tabular}{|c|c|c|c|c|}
\hline
$\omega$ & $0.2$ & $0.1$ & $0.05$ & $0.01$ \\
\hline
$E$ & $3.3\cdot10^{-2}$ & $8.2\cdot10^{-3}$ & $2\cdot10^{-3}$ & $8.2\cdot10^{-5}$ \\
\hline
\end{tabular}
\par\vskip3mm
{\bf Upper bound for the energy $E$ in dependence of the maximal amplitude $\omega$.}
\end{center}

It appears clearly that the energy $E$ needs to be very small.\par
On the contrary, the condition \eq{prima} is much more useful: it gives an upper bound on the modes to be checked. High torsional modes remain small
provided they are above a threshold which depends on the energy $E$ and on the maximum allowed amplitude $\omega$. In the next two tables we give some
numerical bounds on the modes $m$ in dependence of the energy $E$, when $\omega$ is fixed.

\begin{center}
\begin{tabular}{|c|c|c|c|c|c|c|c|}
\hline
$E$ & $1$ & $0.5$ & $0.4$ & $0.3$ & $0.2$ & $0.1$ & $0.05$ \\
\hline
$m$ & $598$ & $298$ & $238$ & $178$ & $118$ & $58$ & $28$ \\
\hline
\end{tabular}
\par\vskip3mm
{\bf Upper bound for the number of modes $m$ in dependence of the energy $E$ when $\omega=0.1$.}
\end{center}
\par\vskip4mm
\begin{center}
\begin{tabular}{|c|c|c|c|c|c|c|c|}
\hline
$E$ & $1$ & $0.5$ & $0.4$ & $0.3$ & $0.2$ & $0.1$ & $0.05$ \\
\hline
$m$ & $148$ & $73$ & $58$ & $43$ & $28$ & $13$ & $5$ \\
\hline
\end{tabular}
\par\vskip3mm
{\bf Upper bound for the number of modes $m$ in dependence of the energy $E$ when $\omega=0.2$.}
\end{center}

It turns out that the map $E\mapsto m$ appears to be almost linear: in fact, we have
$$m\approx\frac{6E}{\omega^2}-2\, .$$
This approximation is reliable for small $\omega$: it follows by dropping the term $9\omega^8$ in \eq{prima}, by dividing by $(m+1)^2$,
and by solving the remaining second order algebraic inequality with respect to $m$.

\begin{remark} {\em With the very same procedure we may rule out high vertical modes where, possibly, \eq{seconda} becomes more useful.
We have here focused our attention only on the torsional modes because they are more dangerous for the safety of the bridge.}
\end{remark}

\subsection{Stability of the low modes}\label{torstab}

Let us fix some energy $E>0$. After having ruled out high modes (say, larger than $m$) through Theorem \ref{epsneg}, we focus our attention on the
lowest $m$ modes, $j\le m$. We consider the functions
\neweq{fouriern}
y^m(x,t)=\sum_{j=1}^m y_j(t)\sin(jx)\ ,\quad z^m(x,t)=\sum_{j=1}^m z_j(t)\sin(jx)
\endeq
aiming to approximate the solution of \eq{fcon1}, see the proof of Theorem \ref{Galerkin} in Section \ref{proofG}.
Put $(Y,Z):=(y_1,...,y_m,z_1,...,z_m)\in\R^{2m}$ and consider the system
\begin{equation}\label{systemU}
\left \{ \begin{array}{ll}
\ddot{y}_j(t)+j^4 y_j(t)+\frac{4}{\pi}\int_0^\pi y^m(x,t)(1\!+\!y^m(x,t)^2\!+\!3z^m(x,t)^2)\sin(jx)\, dx=0\\
\ddot{z}_j(t)+j^2 z_j(t)+\frac{12}{\pi}\int_0^\pi z^m(x,t)(1\!+\!3y^m(x,t)^2\!+\!z^m(x,t)^2)\sin(jx)\, dx=0
\end{array}\right.\quad(j=1,...,m).
\end{equation}
The proof of Theorem \ref{Galerkin} is constructive: (with minor changes) it ensures that $y^m$ and $z^m$ converge (as $m\to\infty$) to the unique solution of \eq{fcon1}.
To \eq{systemU} we associate the initial conditions
\neweq{icfin}
Y(0)=Y_0\, ,\quad \dot{Y}(0)=Y_1\, ,\quad Z(0)=Z_0\, ,\quad \dot{Z}(0)=Z_1\, ,
\endeq
where the components of the vector $Y_0\in \R^m$ are the Fourier coefficients of the projection of $y(0)$ onto the finite dimensional space spanned by
$\{\sin(jx)\}_{j=1}^m$; similarly for $Y_1$, $Z_0$, $Z_1$. In the sequel, we denote by
$$\{e_j\}_{j=1}^m\quad\mbox{the canonical basis of}\quad\R^m\, .$$
The conserved total energy of \eq{systemU}, to be compared with \eq{Entot}, is given by
\begin{eqnarray}
E &:=& \frac{|\dot{Y}|^2}{2}+\frac{|\dot{Z}|^2}{6}+\frac{1}{2}\sum_{j=1}^mj^4y_j^2+\frac{1}{6}\sum_{j=1}^mj^2z_j^2 \notag\\
\ &\ & +\frac{2}{\pi}\int_0^{\pi}\left[\frac{y^m(x,t)^4}{2}+\frac{z^m(x,t)^4}{2}+3y^m(x,t)^2z^m(x,t)^2+y^m(x,t)^2+z^m(x,t)^2\right]\, dx\, .\label{energy}
\end{eqnarray}

Note that \eq{energy} yields the boundedness of each of the $y_j$, $\dot{y}_j$, $z_j$, $\dot{z}_j$.
Once \eq{systemU} is solved, the functions $y^m$ and $z^m$ in \eq{fouriern} provide finite dimensional approximations of the solutions \eq{fourier}
of \eq{fcon1}. In view of Theorem \ref{epsneg}, this approximation is reliable since higher modes have small components.\par
Let us describe rigorously what we mean by vertical mode of \eq{systemU}.

\begin{definition}\label{oscillmode}
Let $m\ge1$ and $1\leq k\leq m$; let $\R^2\ni(\alpha,\beta)\neq(0,0)$. We say that $Y_k$ is the $k$-th \textbf{vertical mode} at energy $E_k(\alpha,\beta)$
if $(Y_k,0)\in\R^{2m}$ is the solution of \eqref{systemU} with initial conditions \eqref{icfin} satisfying
\neweq{infork}
Y(0)=\alpha e_k\, ,\quad \dot{Y}(0)=\beta e_k\, ,\quad Z(0)=\dot{Z}(0)=0\in\R^m\, .
\endeq
\end{definition}

By \eq{energy} and Lemma \ref{calculus} the conserved energy of \eq{systemU}-\eq{infork} is given by
\neweq{Eab}
E_k(\alpha,\beta):=\frac{\beta^2}{2}+(k^4+2)\frac{\alpha^2}{2}+\frac{3}{8}\alpha^4\, .
\endeq
The initial conditions in \eq{infork} determine the constant value of the energy $E_k(\alpha,\beta)$. Different couples of data $(\alpha,\beta)$ in \eq{infork}
may yield the same energy; in particular, for all $(\alpha,\beta)\in\R^2$ there exists a unique $\mu>0$ such that
\neweq{amplit}
E_k(\mu,0)=E_k(\alpha,\beta)\, .
\endeq
This value of $\mu$ is the amplitude of the initial oscillation of the $k$-the vertical mode.\par
The standard procedure to deduce the stability of $(Y_k,0)$ consists in studying the behavior of the perturbed vector $(Y-Y_k,Z)$ where $(Y,Z)$ solves \eqref{systemU},
see \cite[Chapter 5]{verhulst}. This leads to linearize the system \eqref{systemU} around $(Y_k,0)$ and, subsequently, to apply the Floquet theory for differential
equations with periodic coefficients, at least for $m=1,2$. The torsional components $\xi_j$ of the linearization of system \eqref{systemU} around $(Y_k,0)$ satisfy
\begin{equation}\label{inf syst2}
\frac{d^2 \Xi}{dt^2}+P_k(t)\,\Xi=0\,,
\end{equation}
where $\Xi=(\xi_1,..., \xi_m)$ and $P_k(t)$ is a $m\times m$ matrix depending on $Y_k$.

\begin{definition}\label{newdeff}
We say that the $k$-th vertical mode $Y_k$ at energy $E_k(\alpha,\beta)$ (that is, the solution of \eqref{systemU}-\eqref{infork})
is {\bf torsionally stable} if the trivial solution of \eqref{inf syst2} is stable.
\end{definition}

In the following two sections we state our (theoretical and numerical) stability results when $m=1$ and $m=2$. As we briefly explain in the Appendix, the cases
where $m\ge3$ are more involved because $Y$ may spread on more components; this will be discussed in a forthcoming paper \cite{bergazpie}. Our results lead to
the conclusion that
\begin{center}
\begin{minipage}{165mm}
{\em if the energy $E_k(\alpha,\beta)$ in \eqref{Eab} is small enough then small initial torsional oscillations remain small for all time $t>0$, whereas if
$E_k(\alpha,\beta)$ is large (that is, the vertical oscillations are initially large) then small torsional oscillations suddenly become wider.}
\end{minipage}
\end{center}

Therefore, a crucial role is played by the amount of energy inside the system \eq{fcon1}. In the next sections we analyze the energy, both theoretically and
numerically, within \eq{systemU} when $m=1$ and $m=2$. For the theoretical estimates of the critical energy we will make use of some stability criteria by
Zhukowski \cite{zhk} applied to suitable Hill equations \cite{hill}. For the numerical estimates, we choose ``small'' data $Z_0$ and $Z_1$ in \eq{icfin}
and, to evaluate the stability of the $k$-th vertical mode of \eq{systemU}, we consider data $Y_0$ and $Y_1$ concentrated on the $k$-th component of the
canonical basis of $\R^m$. More precisely, we take
\neweq{initialconcentrate}
Y_0=\mu e_k\, ,\qquad Y_1=0\in\R^m\, ,\qquad |Z_0|\le|\mu|\cdot10^{-4}\, ,\qquad |Z_1|\le|\mu|\cdot10^{-4}\, .
\endeq
Then the initial (and constant) energy \eq{energy} is approximately given by $E\approx(k^4+2)\frac{\mu^2}{2}+\frac38 \mu^4$ and the remaining (small)
part of the initial energy is the torsional energy of $Z_0$ and $Z_1$ plus some coupling energy. We also show that different
initial data, with $Y_1\neq 0$, give the same behavior provided the initial energy is the same.

\section{The 1-mode system}\label{onemode}

When $m=1$, the approximated 1-mode solutions \eq{fouriern} have the form
$$y^1(x,t)=y_1(t)\sin x\ ,\qquad z^1(x,t)=z_1(t)\sin x\ .$$
By Lemma \ref{calculus} in the Appendix, \eq{systemU} reads
\begin{equation}\label{unmodo}
\left\{\begin{array}{ll}
\ddot{y}_1+3y_1+\frac32 y_1^3+\frac92 y_1z_1^2=0\\
\ddot{z}_1+7z_1+\frac92 z_1^3+\frac{27}{2}z_1y_1^2=0\ ,
\end{array}\right.
\end{equation}
with some initial conditions
\neweq{initialone}
y_1(0)=\eta_0\, ,\ \dot{y}_1(0)=\eta_1\, ,\ z_1(0)=\zeta_0\, ,\ \dot{z}_1(0)=\zeta_1\ .
\endeq
Hence, in this case $Y_1=\overline{y}$ where $\overline{y}$ is the unique (periodic) solution of the autonomous equation
\neweq{soloy}
\ddot{y}+3y+\frac32 y^3=0\ ,\qquad y(0)=\alpha\, ,\ \dot{y}(0)=\beta\, ,
\endeq
which admits the conserved quantity
\neweq{energyy}
E=\frac{\dot{y}^2}{2}+\frac32 y^2+\frac38 y^4\equiv\frac{\beta^2}{2}+\frac32 \alpha^2+\frac38 \alpha^4\, .
\endeq
Therefore, \eq{inf syst2} reduces to the following Hill equation \cite{hill}:
\neweq{nonlinearhill}
\ddot{\xi}+a(t)\xi=0\quad\mbox{with}\quad a(t)=7+\frac{27}{2}\overline{y}(t)^2\,,
\endeq

In Section \ref{5} we prove

\begin{theorem}\label{stable}
The first vertical mode $Y_1=\overline{y}$ at energy $E_1(\alpha,\beta)$ (that is, the solution of \eqref{soloy})
is torsionally stable provided that
$$\|\overline{y}\|_\infty\le\sqrt{\frac{10}{21}}\approx0.69$$
or, equivalently, provided that
$$E_1\le\frac{235}{294}\approx0.799\, .$$
\end{theorem}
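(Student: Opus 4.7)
The plan is to apply the classical Zhukowski stability criterion to the Hill equation \eqref{nonlinearhill}: if $a(t)$ is continuous and $\tau$-periodic with $(n\pi/\tau)^{2}\le a(t)\le((n+1)\pi/\tau)^{2}$ for every $t$ and some $n\in\mathbb{N}$, then every solution of $\ddot\xi+a(t)\xi=0$ remains bounded. Setting $M=\|\overline y\|_\infty$, I observe that $a(t)=7+\frac{27}{2}\overline y(t)^{2}$ has minimum $7$ (when $\overline y=0$) and maximum $7+\frac{27}{2}M^{2}$ (at the turning points $\overline y=\pm M$), while its period equals $\tau=T/2$, with $T$ the period of the conservative oscillator \eqref{soloy}. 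Since $\min a=7>3$ and the nonlinear oscillator is stiffer than its linearization, the case $n=0$ is hopeless; I would therefore aim at $n=1$, so the task reduces to verifying
\[\frac{4\pi^{2}}{7}\le T^{2}\le\frac{16\pi^{2}}{7+\frac{27}{2}M^{2}}.\]

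The next step is to estimate $T=T(M)$ via the energy relation \eqref{energyy}. Since $M$ is a turning point, one has the factorization $2E-3y^{2}-\frac{3}{4}y^{4}=\frac{3}{4}(M^{2}-y^{2})(4+M^{2}+y^{2})$; combined with the substitution $y=M\sin\phi$, this gives
\[T=\frac{8}{\sqrt{3(4+M^{2})}}\int_{0}^{\pi/2}\frac{d\phi}{\sqrt{1+\frac{M^{2}}{4+M^{2}}\sin^{2}\phi}}.\]
Bounding the integrand via $1\le 1+\frac{M^{2}}{4+M^{2}}\sin^{2}\phi\le\frac{4+2M^{2}}{4+M^{2}}$ (obtained from $0\le\sin^{2}\phi\le 1$) yields the elementary two-sided estimate
\[\frac{16\pi^{2}}{12+6M^{2}}\le T^{2}\le\frac{16\pi^{2}}{12+3M^{2}}.\]

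Plugging these bounds into the two Zhukowski conditions finishes the job. The lower estimate on $T^{2}$ reduces the inequality $T^{2}\ge 4\pi^{2}/7$ to $112\ge 48+24M^{2}$, i.e.\ $M^{2}\le 8/3$; the upper estimate on $T^{2}$ reduces $T^{2}\le 16\pi^{2}/(7+\frac{27}{2}M^{2})$ to $12+3M^{2}\ge 7+\frac{27}{2}M^{2}$, i.e.\ $M^{2}\le 10/21$. Since $10/21<8/3$, the binding constraint is $M\le\sqrt{10/21}$, giving torsional stability in that range. The equivalent energy threshold is obtained by evaluating \eqref{energyy} at the turning point $(\overline y,\dot{\overline y})=(M,0)$: with $M^{2}=10/21$,
\[E_{1}=\frac{3}{2}M^{2}+\frac{3}{8}M^{4}=\frac{5}{7}+\frac{25}{294}=\frac{235}{294}.\]
I expect the principal difficulty to lie in identifying the correct integer $n$ in Zhukowski's test and in obtaining period bounds sharp enough to match the claimed threshold; it is worth noting that the decisive inequality is the \emph{upper} Zhukowski bound combined with the \emph{upper} bound on $T(M)$, whereas the lower Zhukowski bound turns out to be comfortably satisfied.
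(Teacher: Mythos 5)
Your proof is correct and follows essentially the same route as the paper: you apply the Zhukovskii two-sided criterion with $n=1$ to the Hill equation \eqref{nonlinearhill}, whose coefficient $a(t)=7+\tfrac{27}{2}\overline y(t)^2$ is $T/2$-periodic, and you obtain exactly the paper's period bounds $\frac{16\pi^2}{12+6M^2}\le T^2\le\frac{16\pi^2}{12+3M^2}$ (the paper writes these in terms of $E$ via $\Lambda_\pm(E)$, but with $\Lambda_-=M^2$ and $\Lambda_+=4+M^2$ they coincide with yours), after which both arguments reduce to the same pair of algebraic inequalities yielding $M^2\le 10/21$ and $E_1\le 235/294$. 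The only cosmetic differences are your parametrization by $M=\|\overline y\|_\infty$ instead of $E$, your derivation of the period estimates from the trigonometric substitution $y=M\sin\phi$ rather than from the paper's $s=0$ and $s=1$ endpoint evaluations of the elliptic integrand, and that you assert rather than derive that $\overline y^2$ has period $T/2$ (which the paper justifies by the oddness of \eqref{soloy}).
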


As already remarked, Definition \ref{newdeff} is the usual one. Nevertheless, the stability results obtained in \cite{ortega} for suitable
\emph{nonlinear} Hill equations suggest that different equivalent definitions can be stated, possibly not involving a linearization process.
In particular, by \cite{ortega} we know that the stability of the trivial solution of \eq{nonlinearhill} implies the stability of the trivial solution of
$$
\ddot{\xi}+a(t)\xi+\frac{9}{2}\xi^3=0\quad\mbox{with}\quad a(t)=7+\frac{27}{2}\overline{y}(t)^2\, .
$$
We also refer to \cite{chu} and references therein for stability results for nonlinear first order
planar systems. As far as we are aware, there is no general
theory for nonlinear systems of any number of equations but it is reasonable to expect that similar results might hold.
This is why, in our numerical experiments, we consider system \eq{unmodo} without any linearization. The below numerical results
suggest that the threshold of instability is larger than the one in Theorem \ref{stable}. Clearly, they only give a ``local stability'' information (for finite time),
but the observed phenomenon is very precise and the thresholds of torsional instability are determined with high accuracy. The pictures in Figure \ref{tantefig}
display the plots of the solutions of \eq{unmodo} with initial data
\neweq{numinitial}
y_1(0)=\|y_1\|_\infty=10^4z_1(0)\, ,\quad \dot{y}_1(0)=\dot{z}_1(0)=0
\endeq
for different values of $\|y_1\|_\infty$. The green plot is $y_1$ and the black plot is $z_1$.
\begin{figure}[ht]
\begin{center}
{\includegraphics[height=23mm, width=41mm]{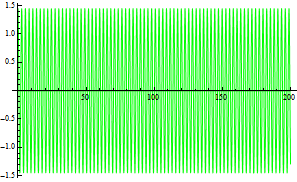}} {\includegraphics[height=23mm, width=41mm]{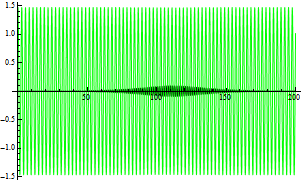}}
{\includegraphics[height=23mm, width=41mm]{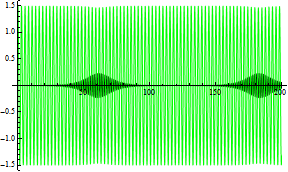}} {\includegraphics[height=23mm, width=41mm]{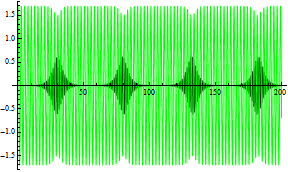}}
\caption{On the interval $t\in[0,200]$, plot of the solutions $y_1$ (green) and $z_1$ (black) of \eq{unmodo}-\eq{numinitial} for
$\|y_1\|_\infty=1.45,\, 1.47,\, 1.5,\, 1.7$ (from left to right).}\label{tantefig}
\end{center}
\end{figure}
For $\|y_1\|_\infty=1.45$ no wide torsion appears, which means that the solution $(y_1,0)$ is torsionally stable. For $\|y_1\|_\infty=1.47$
we see a sudden
increase of the torsional oscillation around $t\approx50$. Therefore, the stability threshold for the vertical amplitude of oscillation lies in the
interval $[1.45,1.47]$. Finer experiments show that the threshold is $\|y_1\|_\infty\approx1.46$, corresponding to a critical energy of about $E\approx4.9$:
these values should be compared
with the statement of Theorem \ref{stable}. When the amplitude is increased further, for $\|y_1\|_\infty=1.5$ and $\|y_1\|_\infty=1.7$, the appearance of
wide torsional oscillations is anticipated (earlier in time) and amplified (larger in magnitude). This phenomenon continues to increase for increasing
$\|y_1\|_\infty$. We then tried different initial
data with $\dot{y}_1(0)\neq0$; as expected, the sudden appearance of torsional oscillations always occurs at the energy level $E\approx4.9$, no matter of how
it is initially distributed between kinetic and potential energy of $y_1$. Summarizing, we have seen that the ``true'' (numerical) thresholds are larger than
the ones obtained in Theorem \ref{stable}. \par\smallskip
Let us give a different point of view of this phenomenon. If we slightly modify the parameters involved we can prove that the nonlinear frequency
of $z_1$ is larger than the frequency of $y_1$, which shows that the mutual position of $z_1$ and $y_1$ varies and may create the spark for an energy transfer.
Instead of $EI=3\mu=1$, we take $EI=\mu=1$ and \eqref{unmodo} should then be replaced by
\begin{equation}\label{unmodobis}
\left\{\begin{array}{ll}
\ddot{y}_1+3y_1+\frac32 y_1^3+\frac92 y_1z_1^2=0\\
\ddot{z}_1+9z_1+\frac92 z_1^3+\frac{27}{2}z_1y_1^2=0\ .
\end{array}\right.
\end{equation}

Then we prove

\begin{proposition}\label{signchange}
Let $(y_1,z_1)$ be a nontrivial solution of \eqref{unmodobis}. Let $t_1<t_2$ be
two consecutive critical points of $y_1(t)$. Then there exists $\tau\in(t_1,t_2)$ such that $z_1(\tau)=0$.
\end{proposition}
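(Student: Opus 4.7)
The plan is to argue by contradiction. Suppose $z_1$ has no zero in $(t_1,t_2)$ and, without loss of generality, $z_1>0$ on $[t_1,t_2]$. The starting point is to view \eqref{unmodobis} as a pair of linear Hill equations
$$\ddot y_1 + a(t)\,y_1 = 0,\qquad \ddot z_1 + b(t)\,z_1 = 0,$$
with solution-dependent coefficients $a(t):=3+\tfrac{3}{2}y_1^2+\tfrac{9}{2}z_1^2$ and $b(t):=9+\tfrac{9}{2}z_1^2+\tfrac{27}{2}y_1^2$. The algebraic identity $b(t)-a(t)=6+12\,y_1(t)^2\ge 6$ is the uniform ``frequency gap'' that should make the $z_1$-equation oscillate strictly faster than the $y_1$-equation. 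I would also record the geometric preliminary that $y_1$ itself vanishes at some $t_0\in(t_1,t_2)$: by ``consecutive'' critical points $y_1$ is strictly monotone in between, and $\ddot y_1(t_i)=-a(t_i)y_1(t_i)$ forces $y_1(t_1)$ and $y_1(t_2)$ to have opposite signs in order that $t_1,t_2$ be opposite extrema.

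Next I would deploy the Sturm--Picone identity
$$\frac{d}{dt}\!\left[\frac{y_1\,(z_1\dot y_1-y_1\dot z_1)}{z_1}\right] \;=\; (b-a)\,y_1^2 + \frac{(z_1\dot y_1-y_1\dot z_1)^2}{z_1^2} \;\ge\; 0$$
on each of $[t_1,t_0]$ and $[t_0,t_2]$. On each subinterval one endpoint is a zero of $y_1$ and the other a zero of $\dot y_1$, so the bracket collapses to $-y_1(t_i)^2\dot z_1(t_i)/z_1(t_i)$ at $t_i$ (and to $0$ at $t_0$), and integration of the non-negative right-hand side yields $\dot z_1(t_1)>0$ and $\dot z_1(t_2)<0$. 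Combined with $\ddot z_1=-bz_1<0$, this says only that $z_1$ is a concave positive bump with a strict interior maximum on $(t_1,t_2)$, which by itself is not yet contradictory.

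To convert this into a contradiction I would then multiply $\ddot z_1+bz_1=0$ by the test function $\varphi(t):=\sin(\omega(t-t_1))$ with $\omega:=\pi/(t_2-t_1)$ and integrate twice by parts; using $\varphi(t_1)=\varphi(t_2)=0$ and $\dot\varphi(t_1)=\omega=-\dot\varphi(t_2)$, this produces the integral identity
$$\omega\bigl(z_1(t_1)+z_1(t_2)\bigr) + \int_{t_1}^{t_2}\bigl(b(t)-\omega^2\bigr)\,z_1(t)\,\varphi(t)\,dt \;=\; 0.$$
The first summand is strictly positive; so the integral must be strictly negative, whereas if $b\ge\omega^2$ pointwise on $[t_1,t_2]$ the integrand is non-negative, giving the desired contradiction.

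The hard part is therefore the quantitative estimate $\omega^2\le\min_{[t_1,t_2]}b(t)$. Since $b\ge 9$, the simplest sufficient condition is $t_2-t_1\ge\pi/3$, which for small-amplitude solutions is immediate from Sturm comparison applied to $\ddot y_1+a\,y_1=0$ with $a\ge 3$. For large-amplitude solutions the nonlinear period of $y_1$ can be much shorter, but precisely in that regime the coupling term $\tfrac{27}{2}y_1^2$ in $b$ becomes large, so the sharper requirement $\omega^2\le\min b$ should still hold; quantifying this compensation --- balancing an upper bound on $a_{\max}$ against a lower bound on $\min b$, and using conservation of the Hamiltonian of \eqref{unmodobis} --- is where I expect the technical core of the proof to lie.
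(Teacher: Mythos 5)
Your reduction to $b\ge\omega^2$ on $[t_1,t_2]$ does not close: the estimate genuinely fails for large-amplitude solutions. At the unique zero $t_0\in(t_1,t_2)$ of $y_1$ the coupling term in $b$ disappears and $b(t_0)=9+\tfrac{9}{2}z_1(t_0)^2$, so $\min_{[t_1,t_2]}b$ stays of order $9$ when $z_1$ is small (which is exactly the regime under scrutiny). Meanwhile, for a solution whose $y_1$-component has amplitude $A\gg1$, the Duffing-type equation $\ddot y_1+3y_1+\tfrac32 y_1^3+\tfrac92 y_1z_1^2=0$ has half-period $t_2-t_1\sim A^{-1}$, so $\omega^2=\pi^2/(t_2-t_1)^2\sim A^2\to\infty$. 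Your intuition that ``the coupling term $\tfrac{27}{2}y_1^2$ in $b$ becomes large'' is correct pointwise at $t_1,t_2$ where $|y_1|=A$, but not near $t_0$, and the test-function identity weights the dip at $t_0$ heavily (the integrand $(b-\omega^2)z_1\varphi$ can be very negative there). So the sign argument on the last line breaks and the contradiction evaporates; no amount of balancing $a_{\max}$ against $\min b$ via the Hamiltonian rescues the pointwise inequality, because the two quantities live at different time instants.

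The paper proves the statement by a route that avoids Sturm--Picone and test functions entirely: one checks directly from \eqref{unmodobis} the exact identity
$$\frac{d}{dt}\bigl[\dot y_1^3\dot z_1\bigr]+9\,\frac{d}{dt}\Bigl[y_1z_1+\tfrac12 y_1z_1^3+\tfrac12 z_1y_1^3\Bigr]\dot y_1^2=0,$$
integrates over $(t_1,t_2)$ using $\dot y_1(t_1)=\dot y_1(t_2)=0$, integrates by parts once, and substitutes the first equation of \eqref{unmodobis} to arrive at
$$0=\int_{t_1}^{t_2} y_1^2\,z_1\Bigl(1+\tfrac{z_1^2}{2}+\tfrac{y_1^2}{2}\Bigr)\Bigl(1+\tfrac{y_1^2}{2}+\tfrac{3z_1^2}{2}\Bigr)\dot y_1\,dt.$$
Since $\dot y_1$ has fixed sign on $(t_1,t_2)$ (consecutive critical points) and the remaining factors other than $z_1$ are nonnegative (in fact strictly positive a.e.), $z_1$ must change sign. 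Morally this is what you were aiming for with the test function $\dot y_1$ rather than $\sin(\omega(t-t_1))$: it turns the frequency-gap heuristic into an algebraic identity that does not require any lower bound on $t_2-t_1$. Your opening observations (that $y_1$ must vanish in $(t_1,t_2)$, and the frequency gap $b-a=6+12y_1^2\ge6$) are correct and provide useful intuition, but the Sturm--Picone step and the sinusoidal test function are a detour that leads to an unprovable pointwise inequality.
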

\proof For any solution $(y_1,z_1)$ of \eqref{unmodobis} we have
\begin{equation}\label{plapd}
\frac{d}{dt}\Big[\dot{y}_1^3\dot{z}_1\Big]+9\frac{d}{dt}\left[y_1z_1+\frac{y_1z_1^3}{2}+\frac{z_1y_1^3}{2}\right]\, \dot{y}_1^2=0\, .
\end{equation}
By integrating \eqref{plapd} by parts over $(t_1,t_2)$ we obtain
\begin{eqnarray*}
0 &=& 9\int_{t_1}^{t_2}\frac{d}{dt}\left[y_1(t)z_1(t)+\frac{y_1(t)z_1(t)^3}{2}+\frac{z_1(t)y_1(t)^3}{2}\right]\, \dot{y}_1(t)^2\, dt\\
\ &=& -18\int_{t_1}^{t_2}y_1(t)z_1(t)\left[1+\frac{z_1(t)^2}{2}+\frac{y_1(t)^2}{2}\right]\, \ddot{y}_1(t)\dot{y}_1(t)\, dt\\
\ &=& 54\int_{t_1}^{t_2}y_1(t)^2z_1(t)\left[1+\frac{z_1(t)^2}{2}+\frac{y_1(t)^2}{2}\right]\left[1+\frac{y_1(t)^2}{2}+\frac{3z_1(t)^2}{2}\right]\, \dot{y}_1(t)\, dt
\end{eqnarray*}
where, in the last step, we used \eqref{unmodobis}$_1$. In the integrand, $y_1^2[1+\frac{z_1^2}{2}+\frac{y_1^2}{2}][1+\frac{y_1^2}{2}+\frac{3z_1^2}{2}]\ge0$ and also
$\dot{y}_1$ has fixed sign so that the integral may vanish only if $z_1(t)$ changes sign in $(t_1,t_2)$.\endproof

Proposition \ref{signchange} shows that the nonlinear frequency of $z_1$ is always larger than the frequency of $y_1$. If the frequency of $z_1$ reaches
a multiple of the frequency of $y_1$ then an internal resonance is created and this yields a possible transfer of energy from $y_1$ to $z_1$.

\section{The 2-modes system}\label{twomode}

Let us fix $m=2$ in \eq{fouriern} and \eq{systemU} and put
$$y^2(x,t)=y_1(t)\sin x+y_2(t)\sin(2x)\, ,\qquad z^2(x,t)=z_1(t)\sin x+z_2(t)\sin(2x)\,.$$
Then, after integration over $(0,\pi)$ and using Lemma \ref{calculus}, we see that $y_j$ and $z_j$ satisfy the system\renewcommand{\arraystretch}{2.5}
\begin{equation}\label{coeff system 2}
\left \{ \begin{array}{ll}
\displaystyle{\ddot{y}_1+3y_1+\frac{9}{2}y_1z_1^2+3y_1z_2^2+3y_1y_2^2+\frac{3}{2}y_1^3+6z_1z_2y_2=0}\\
\displaystyle{\ddot{y}_2+18y_2+\frac{9}{2}y_2z_2^2+3y_2z_1^2+3y_2y_1^2+\frac{3}{2}y_2^3+6z_1z_2y_1=0}\\
\displaystyle{\ddot{z}_1+7z_1+\frac{27}{2}z_1y_1^2+9z_1y_2^2+9z_1z_2^2+\frac{9}{2}z_1^3+18y_1y_2z_2=0} \\
\displaystyle{\ddot{z}_2+10z_2+\frac{27}{2}z_2y_2^2+9z_2y_1^2+9z_2z_1^2+\frac{9}{2}z_2^3+18y_1y_2z_1=0}\,,
\end{array}\right.
\end{equation}\renewcommand{\arraystretch}{1.5}
while the energy becomes
\begin{eqnarray*}
E &=& \frac{1}{2}(\dot{y}_1^2+\dot{y}_2^2)+\frac{1}{6}(\dot{z}_1^2+\dot{z}_2^2)+\frac{3}{2}y_1^2+9y_2^2
+\frac{7}{6}z_1^2+\frac{5}{3}z_2^2+6y_1y_2z_1z_2\\
&\ &+\frac{9}{4}(y_1^2z_1^2+y_2^2z_2^2)+\frac{3}{2}(y_1^2y_2^2+y_1^2z_2^2+y_2^2z_1^2+z_1^2z_2^2)+\frac{3}{8}(y_1^4+y_2^4+z_1^4+z_2^4)\,.
\end{eqnarray*}
Since our purpose is to emphasize perturbations of linear equations, it is more convenient to rewrite the two last equations in \eq{coeff system 2} as
\renewcommand{\arraystretch}{2.2}
\begin{equation}\label{linear2modes}
\left \{ \begin{array}{ll}
\displaystyle{\ddot{z}_1+\left(7+\frac{27}{2}y_1^2+9y_2^2+9z_2^2\right)z_1+\frac{9}{2}z_1^3=-18y_1y_2z_2} \\
\displaystyle{\ddot{z}_2+\left(10+\frac{27}{2}y_2^2+9y_1^2+9z_1^2\right)z_2+\frac{9}{2}z_2^3=-18y_1y_2z_1}\,.
\end{array}\right.
\end{equation}
Hence, in this case we have $Y_j=\overline{y}_je_j$ ($j=1,2$) where $\overline{y}_j$ is the unique (periodic) solution of the problem
$$
\ddot{y}_j(t)+(j^4+2)y_j(t)+\frac{3}{2}y_j(t)^3=0\, ,\quad y_j(0)=\alpha\, ,\quad\dot{y}_j(0)=\beta\, ,
$$
which admits the conserved energy
\begin{equation}\label{energyEj}
E_j=\frac{\dot{y}(t)^2}{2}+(j^4+2)\frac{y(t)^2}{2}+\frac{3 y(t)^4}{8}=\frac{\beta^2}{2}+(j^4+2)\frac{\alpha^2}{2}+\frac{3}{8}\alpha^4\geq 0\, .
\end{equation}
Then \eq{inf syst2} becomes the following system of uncoupled Hill equations:
\neweq{nonlinearhillsystem2}\renewcommand{\arraystretch}{1.5}
\left \{ \begin{array}{ll}
\ddot{\xi}_1(t)+a_{1,j}(t) \xi_1(t)=0\\
\ddot{\xi}_2(t)+a_{2,j}(t) \xi_2(t)=0
\end{array}\right.\qquad(j=1,2)
\endeq
where $a_{i,j}(t)=i^2+6+9\alpha_{i,j}\overline{y}_j(t)^2$, $\alpha_{i,j}=1$ if $i\neq j$ and $\alpha_{i,i}=\frac{3}{2}$.\par
In Section \ref{22} we prove

\begin{theorem}\label{stable2}
The first vertical mode $Y_1\!\!=\!\!(\overline{y}_1,0)$ of \eqref{coeff system 2} at energy $E_1$ is torsionally stable provided that
\neweq{bounds2m}
\|\overline{y}_1\|_\infty\le\frac{1}{\sqrt 3}\approx0.577\ \Longleftrightarrow\ E_1\le\frac{13}{24}\approx0.542\, .
\endeq
The second vertical mode $Y_2\!\!=\!\!(0,\overline{y}_2)$ of \eqref{coeff system 2} at energy $E_2$ is torsionally stable provided that
$$\|\overline{y}_2\|_\infty\le\sqrt{\frac{32}{51}}\approx0.792\ \Longleftrightarrow\ E_2\le\frac{5024}{867}\approx5.795\, .$$
\end{theorem}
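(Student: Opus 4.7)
The plan is to mirror the approach used for Theorem \ref{stable}, adapting it to the 2-modes setting in which each vertical mode generates a \emph{pair} of uncoupled Hill equations rather than a single one. First I would linearize \eqref{coeff system 2} around $Y_1=(\overline{y}_1,0)$ by setting $(y_1,y_2,z_1,z_2)=(\overline{y}_1+\eta_1,\eta_2,\xi_1,\xi_2)$ and keeping only terms of first order in the perturbation. In the last two equations of \eqref{coeff system 2}, the cubic coupling terms $18\,y_1y_2z_2$ and $18\,y_1y_2z_1$ vanish at first order because $y_2\equiv 0$ on the orbit $Y_1$, and analogously the $\eta$--$\xi$ coupling drops out. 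This leaves the torsional perturbations $(\xi_1,\xi_2)$ satisfying exactly the uncoupled Hill system \eqref{nonlinearhillsystem2} with $j=1$. The same linearization around $Y_2=(0,\overline{y}_2)$ yields \eqref{nonlinearhillsystem2} with $j=2$.

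Next I would invoke Zhukowski's stability criterion for the Hill equation $\ddot{\xi}+a(t)\xi=0$: if $a$ is positive and $T$-periodic and there exists an integer $n\ge 0$ with
$$\Bigl(\frac{n\pi}{T}\Bigr)^{\!2}\le a(t)\le \Bigl(\frac{(n+1)\pi}{T}\Bigr)^{\!2}\qquad\forall t,$$
then the trivial solution is stable. Since $\overline{y}_j$ is a periodic solution of the autonomous conservative ODE $\ddot y+(j^4+2)y+\tfrac{3}{2}y^3=0$, the coefficient $a_{i,j}(t)=i^2+6+9\alpha_{i,j}\overline{y}_j(t)^2$ is periodic (and uniformly positive), with period depending on $\|\overline{y}_j\|_\infty$ via the standard elliptic-integral formula. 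The task then reduces to choosing $n$ so that \emph{both} $a_{1,j}$ and $a_{2,j}$ fit inside a common Zhukowski strip, and to translating this geometric condition into an explicit upper bound on $\|\overline{y}_j\|_\infty$; the tighter of the two resulting constraints (over $i=1,2$) yields the claimed thresholds $1/\sqrt{3}$ and $\sqrt{32/51}$.

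Once the amplitude bounds are obtained, the equivalence with the energy bounds is immediate from \eqref{energyEj} evaluated at $\beta=0$, namely $E_j=(j^4+2)\,\alpha^2/2+3\alpha^4/8$: substituting $\alpha=1/\sqrt 3$ gives $E_1=13/24$, and $\alpha=\sqrt{32/51}$ gives $E_2=5024/867$.

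The principal obstacle is step three. In Theorem \ref{stable} there was only one Hill equation to control, so a single Zhukowski inequality sufficed; here each vertical mode produces two Hill equations whose coefficients have different base values ($7$ and $10$) and different sensitivities ($27/2$ and $9$) to $\overline{y}_j^2$. We must therefore ensure that a \emph{single} integer $n$ works for both, while the strip width $\pi/T_j$ itself depends implicitly on $\|\overline{y}_j\|_\infty$ through the period of the nonlinear oscillator. Sharp estimates for the period as a function of amplitude (obtained via monotonicity of the elliptic integral, or via explicit integration using \eqref{energyEj}) are the technical heart of the argument, and it is precisely the competition between the two Hill equations that forces the thresholds to be strictly smaller than the 1-mode threshold $\sqrt{10/21}$ of Theorem \ref{stable}.
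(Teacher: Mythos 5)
Your plan matches the paper's proof in essentially every step: linearize \eqref{coeff system 2} around $(\overline{y}_j,0)$ to obtain the uncoupled Hill pair \eqref{nonlinearhillsystem2}, apply Zhukovskii's band criterion to each of the two equations, and convert the resulting coefficient constraints into amplitude and energy bounds via \eqref{energyEj} and period estimates analogous to \eqref{upper}--\eqref{lower}. One point worth making explicit, which your ``choose $n$'' glosses over: the paper uses the $n=1$ band $\bigl[\tfrac{4\pi^2}{T_1^2},\tfrac{16\pi^2}{T_1^2}\bigr]$ for the first vertical mode but the $n=0$ band $\bigl[0,\tfrac{4\pi^2}{T_2^2}\bigr]$ for the second; this is forced by the fact that $T_2(0)=2\pi/\sqrt{18}$ is much shorter than $T_1(0)=2\pi/\sqrt{3}$, so the same band index cannot work for both modes. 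Finally, your closing remark that ``the competition \dots forces the thresholds to be strictly smaller than $\sqrt{10/21}$'' is wrong for $Y_2$: the second-mode threshold $\sqrt{32/51}\approx0.79$ is in fact \emph{larger} than the 1-mode threshold $\sqrt{10/21}\approx0.69$. Only the first-mode threshold $1/\sqrt3$ is tighter, and that is because the additional Hill equation for $\xi_2$ (with base coefficient $10$) becomes the binding constraint.
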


Again, Theorem \ref{stable2} merely gives a sufficient condition for the torsional stability and, numerically, the thresholds seem to be larger.
Once more, numerics only shows local stability but the observed phenomena are very precise and hence they appear reliable.
Here the situation is slightly more complicated because two modes (4 equations) are involved. Therefore, we proceed differently. \par
We start by studying the stability of the first vertical mode. The pictures in Figure \ref{tantefig2} display the plots
of the torsional components $(z_1,z_2)$ of the solutions of \eq{coeff system 2} with initial data
\neweq{numinitial2}
y_1(0)=\|y_1\|_\infty=10^4y_2(0)=10^4z_1(0)=10^4z_2(0)\, ,\quad \dot{y}_1(0)=\dot{y}_2(0)=\dot{z}_1(0)=\dot{z}_2(0)=0
\endeq
for different values of $\|y_1\|_\infty$. The green plot is $z_1$ and the black plot is $z_2$.
\begin{figure}[ht]
\begin{center}
{\includegraphics[height=23mm, width=41mm]{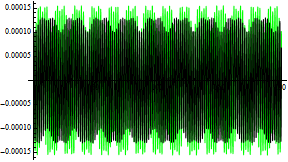}} {\includegraphics[height=23mm, width=41mm]{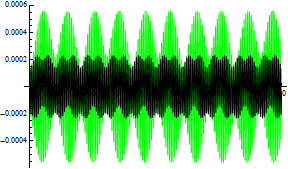}}
{\includegraphics[height=23mm, width=41mm]{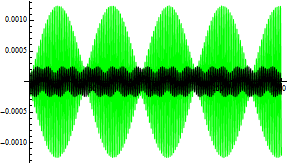}} {\includegraphics[height=23mm, width=41mm]{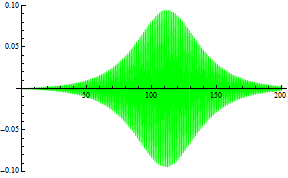}}
\caption{On the interval $t\in[0,200]$, plot of the torsional components $z_1$ (green) and $z_2$ (black) of \eq{coeff system 2}-\eq{numinitial2} for
$\|y_1\|_\infty=1,\, 1.4,\, 1.45,\, 1.47$ (from left to right).}\label{tantefig2}
\end{center}
\end{figure}
Recalling that the initial torsional amplitudes are of the order of $10^{-4}$ we can see that, for $\|y_1\|_\infty=1$, both torsional components remain small,
although $z_1$ is slightly larger than $z_2$. By increasing the $y_1$ amplitude, $\|y_1\|_\infty=1.4$ and $\|y_1\|_\infty=1.45$, we see that $z_1$ and $z_2$ still remain
small but now $z_1$ is significantly larger than $z_2$ and displays bumps. When $\|y_1\|_\infty=1.47$, $z_1$ has become so large that $z_2$, which is still of
the order of $10^{-4}$, is no longer visible in the fourth plot of Figure \ref{tantefig2}. The threshold for the appearance of $z_1\gg z_2$ is again
$\|y_1\|_\infty\approx1.46$, see Section \ref{onemode}. Therefore, it seems that the stability of the first vertical mode does not transfer energy
on the second modes; but, as we now show, this is not true.\par
We increased further the initial datum up to $\|y_1\|_\infty=3$. In Figure \ref{duefig} we display the plot of all the components
$(y_1,y_2,z_1,z_2)$ of the corresponding solution of \eq{coeff system 2}-\eq{numinitial2}.
\begin{figure}[ht]
\begin{center}
{\includegraphics[height=23mm, width=41mm]{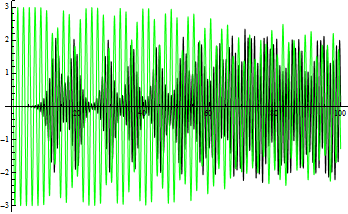}}\qquad {\includegraphics[height=23mm, width=41mm]{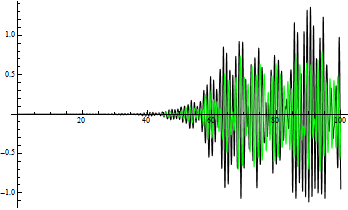}}
\caption{On the interval $t\in[0,100]$, plot of the solution of \eq{coeff system 2}-\eq{numinitial2} for $\|y_1\|_\infty=3$. Left picture:
green=$y_1$, black=$z_1$. Right picture: green=$y_2$, black=$z_2$.}\label{duefig}
\end{center}
\end{figure}
One can see that some energy is also transferred to both the vertical and torsional second modes, although this occurs with some delay (in the second picture,
the green oscillation is hidden but it is almost as wide as the black oscillation).\par
Concerning the stability of the second vertical mode, we just quickly describe our numerical results. The loss of stability appeared for
$\|y_2\|_\infty\approx0.945$ corresponding to $E\approx8.33$; in this case, Theorem \ref{stable2} gives a fairly good sufficient condition. For
$\|y_2\|_\infty\le0.94$, both $z_1$ and $z_2$ (and also $y_1$) remain small and of the same magnitude, with the amplitude of oscillations of $z_1$ being almost
constant while the amplitude of oscillations of $z_2$ being variable. For $\|y_2\|_\infty\ge0.945$, $z_2$ suddenly displays the bumps seen in the above pictures.
Finally, for $\|y_2\|_\infty\ge1.08$, also $y_1$ and $z_1$ display sudden wide oscillations which, however, appear delayed in time when compared with $z_2$.

\section{Proof of Theorem \ref{Galerkin}}\label{proofG}

The existence and uniqueness issues are inspired to \cite[Theorems 8 and 11]{holubova} while the regularity statement is achieved by arguing as in
\cite[Lemma 8.1]{FerGaz}.\par For the existence part we perform a Galerkin procedure. The sequence $\{\sin(jx)\}_{j\geq1}$ is an orthogonal basis
of the spaces $L^2(0,\pi), H_0^1(0,\pi)$ and $H^2\cap H_0^1(0,\pi)$. Then, for a given $n\in \N$, we set
\begin{equation}
\label{ym}
y^n(x,t)=\sum_{j=1}^n y_j(t)\sin(jx)\ ,\quad \theta^n(x,t)=\sum_{j=1}^n \theta_j(t)\sin(jx)\,,
\end{equation}
where $y_j$ and $\theta_j$ satisfy the system of ODE's
\begin{equation}
\label{coeff system}
\left \{ \begin{array}{ll}
\ddot{y}_j(t)+j^4 y_j(t)+\frac{2}{\pi}\int_0^{\pi}[f(y^n(x,t)+\ell\sin \theta^n(x,t))+f(y^n(x,t)-\ell\sin \theta^n(x,t))]\, \sin(jx)\,dx=0 \\
\ell\ddot{\theta}_j(t)+j^2\ell \theta_j(t)+\frac{6}{\pi}\int_0^{\pi}[f(y^n(x,t)+\ell\sin \theta^n(x,t))-f(y^n(x,t)-\ell\sin \theta^n(x,t))]\, \sin(jx)\,dx=0
\end{array}\right.
\end{equation}
for $t>0$ and $j=1,...,n$.
Moreover, writing the Fourier expansion of the initial data \eq{initial} as
$$\eta_0(x)=\sum_{j=1}^\infty \eta_0^j\sin(jx)\quad\text{in }H^2\cap H_0^1(0,\pi)\qquad y_1(x)=\sum_{j=1}^\infty \eta_1^j\sin(jx)\quad\text{in }L^2(0,\pi)$$
$$\theta_0(x)=\sum_{j=1}^\infty \theta_0^j \sin(jx) \quad \text{in }H_0^1(0,\pi)\qquad \theta_1(x)=\sum_{j=1}^\infty \theta_1^j \sin(jx)\quad\text{in }L^2(0,\pi)\,,$$
we assume that, for every $1\leq j\leq n$, the $y_j$'s and the $\theta_j$'s satisfy
\begin{equation}\label{initial condition m}
y_j(0)= \eta_0^j\,, \quad \dot{y}_j(0)= \eta_1^j\,,\quad \theta_j(0)= \theta_0^j\,, \quad \dot{\theta}_j(0)= \theta_1^j\,.
\end{equation}

The existence of a unique local solution of \eq{coeff system}-\eq{initial condition m} in some maximal interval of continuation
$[0, \tau_n)$, $\tau_n>0$, follows from standard theory of ODE's. Then, we multiply the first equation in \eq{coeff system} by $6\dot{y}_j(t)$ and
the second equation by $2\dot{\theta}_j(t)$, then we add the so obtained $2n$ equations for $j=1$ to $n$, finally we integrate over $(0,t)$ to obtain
$$ \begin{array}{llll}
3\|\dot{y}^n\!(t)\|_2^2\!+\!3\|y_{xx}^n\!(t)\|_2^2\!+\!\ell^2\|\dot{\theta}^n\!(t)\|_2^2\!+\!\ell^2\|\theta_x^n\!(t)\|_2^2\!+\!
12\int_0^{\pi}\!\left[F(y^n\!(t)\!+\!\ell\sin \theta^n\!(t))\!+\!F(y^n\!(t)\!-\!\ell\sin \theta^n\!(t)) \right]\!dx\!\leq\\
3\|\dot{y}^n\!(0)\|_2^2\!+\!3\|y_{xx}^n\!(0)\|_2^2\!+\!\ell^2\|\dot{\theta}^n\!(0)\|_2^2\!+\!\ell^2\|\theta_x^n\!(0)\|_2^2\!+\!
12\int_0^{\pi}\!\left[ F(y^n\!(0)\!+\!\ell\sin \theta^n\!(0))\!+\!F(y^n\!(0)\!-\!\ell\sin \theta^n\!(0)) \right]\!dx
\end{array}$$
where $y^n(t)=y^n(x,t)$, $\theta^n(t)=\theta^n(x,t)$ and $F(s)=\int_0^s f(\tau)\,d\tau$. Since $F\geq 0$, this yields
\begin{equation}\label{uniform bound}
3\|\dot{y}^n(t)\|_2^2+3\|y_{xx}^n(t)\|_2^2+\ell^2\|\dot{\theta}^n(t)\|_2^2+\ell^2\|\theta_x^n(t)\|_2^2\leq C \quad \text{for any } t\in[0,\tau_n) \text{ and } n\geq 1\,
\end{equation}
for some constant $C$ independent of $n$ and $t$. Hence, $\{y^n\}$ and $\{\theta^n \}$ are globally defined in $\R_+$ and uniformly bounded,
respectively, in the spaces $C^0([0,T];H^2\cap H_0^1(0,\pi))\cap C^1([0,T];L^2(0,\pi))$ and $C^0([0,T];H_0^1(0,\pi))\cap C^1([0,T];$ $L^2(0,\pi))$ for all
finite $T>0$. We show that they both admit a strongly convergent subsequence in the same spaces.\par
The estimate \eq{uniform bound} shows that $\{y^n\}$ and $\{\theta^n \}$ are bounded and equicontinuous in $C^0([0,T];L^2(0,\pi))$. By the Ascoli-Arzel\`a Theorem
we then conclude that, up to a subsequence, $y^n\rightarrow y$ and $\theta^n\rightarrow \theta$ strongly in $C^0([0,T];L^2(0,\pi))$.
By \eq{uniform bound} and the embedding $H^1_0(0,\pi)\subset L^\infty(0,\pi)$ we also infer that $y^n$ and $\theta^n$ are uniformly bounded in
$[0,\pi]\times[0,T]$. Whence,
$$
\left|\int_0^{\pi} F(y^n(t)+\ell\sin \theta^n(t))\,dx -\int_0^{\pi} F(y(t)+\ell\sin \theta(t))\,dx\right|
$$
$$\le\int_0^{\pi} |f(\tau(y^n(t)+\ell\sin \theta^n(t))+(1-\tau)(y(t)+\ell\sin \theta(t)))|\left(|y^n(t)-y(t)|+\ell |\theta^n(t)- \theta(t)|\right)\, dx$$
for some $\tau=\tau(x,t,n)\in[0,1]$. Since $y^n$ and $\theta^n$ are uniformly bounded, so is $f(\tau(y^n+\ell\sin \theta^n)+(1-\tau)(y+\ell\sin \theta))$
and the latter inequality yields
\neweq{firstbound}
\left|\int_0^{\pi} F(y^n(t)+\ell\sin \theta^n(t))\,dx -\int_0^{\pi} F(y(t)+\ell\sin \theta(t))\,dx\right|\le C\Big(\|y^n(t)-y(t)\|_2+ \ell\|\theta^n(t)-\theta(t)\|_2\Big)\to0\,.
\endeq
We may argue similarly for $F(y^n-\ell \sin \theta^n)$.\par
Next, for every $n>m\ge1$, we set $y^{n,m}:=y^n-y^m$ and $\theta^{n,m}:=\theta^n-\theta^m$. Repeating the computations which yield \eq{uniform bound}, for all $t\in[0,T]$ one gets
$$\begin{array}{llll}
3\|\dot{y}^{n,m}(t)\|_2^2+3\|y_{xx}^{n,m}(t)\|_2^2+\ell^2\|\dot{\theta}^{n,m}(t)\|_2^2+\ell^2\|\theta_x^{n,m}(t)\|_2^2=
\\3\|\dot{y}^{n,m}(0)\|_2^2+3\|y_{xx}^{n,m}(0)\|_2^2+\ell^2\|\dot{\theta}^{n,m}(0)\|_2^2+\ell^2\|\theta_x^{n,m}(0)\|_2^2-
\\12\int_0^{\pi} \left[F(y^n(t)\!+\!\ell\sin \theta^n(t))\!-\!F(y^m(t)\!+\!\ell\sin \theta^m(t))\!+\!F(y^n(t)\!-\!\ell\sin \theta^n(t))\!-\!
F(y^m(t)\!-\!\ell\sin \theta^m(t))\right]\!+
\\12\int_0^{\pi} \left[F(y^n(0)\!+\!\ell\sin \theta^n(0))\!-\!F(y^m(0)\!+\!\ell\sin \theta^m(0))\!+\!F(y^n(0)\!-\!\ell\sin \theta^n(0))\!-\!
F(y^m(0)\!-\!\ell\sin \theta^m(0))\right].
\end{array}$$
Therefore, by using \eq{firstbound}, we infer that
$$
\sup_{t\in[0,T]}\ \Big(3\|\dot{y}^{n,m}(t)\|_2^2+3\|y_{xx}^{n,m}(t)\|_2^2+\ell^2\|\dot{\theta}^{n,m}(t)\|_2^2+\ell^2\|\theta_x^{n,m}(t)\|_2^2\Big)\to0\qquad\mbox{as }n,m\to\infty
$$
so that $\{y^n\}$ and $\{\theta^n \}$ are Cauchy sequences in the spaces
$C^0([0,T];H^2\cap H_0^1(0,\pi))\cap C^1([0,T];L^2(0,\pi))$ and $C^0([0,T];H_0^1(0,\pi))\cap C^1([0,T];L^2(0,\pi))$, respectively. In turn this yields
$$y^n\rightarrow y \text{ in } C^0([0,T];H^2\cap H_0^1(0,\pi))\cap C^1([0,T];L^2(0,\pi))\quad \text{as }n\rightarrow +\infty\, ,$$
$$\theta^n\rightarrow \theta \text{ in } C^0([0,T];H_0^1(0,\pi))\cap C^1([0,T];L^2(0,\pi))\quad \text{as }n\rightarrow +\infty\,.$$
Let $\Upsilon\in C_c^{\infty}(0,T)$, $\varphi\in H^2\cap H_0^1(0,\pi)$ and $\psi\in H_0^1(0,\pi)$. We denote by $\varphi^n$ and $\psi^n$ the
orthogonal projections of $\varphi$ and
$\psi$ onto $X_n:={\rm span}\{\sin(jx)\}_{j=1}^n$ from, respectively, the spaces $H^2\cap H_0^1(0,\pi)$ and $H_0^1(0,\pi)$.
Then \eq{coeff system} yields
$$\left \{ \begin{array}{llll}
\int_0^T(\dot{y}^n(t),\varphi^n)\dot{\Upsilon}(t) \,dt=\\
\int_0^T \left[(y_{xx}^n(t),(\varphi^n)'')+\frac{2}{\pi}
(f(y^n(t)-\ell\sin \theta^n(t))+f(y^n(t)+\ell\sin \theta^n(t)),\varphi^n)\right]\Upsilon(t)\,dt\\
\int_0^T(\dot{\theta}^n(t),\psi^n)\dot{\Upsilon}(t)\,dt=\\
-\int_0^T\left[(\theta_x^n(t),(\psi^n)')+\frac{6}{\pi\ell}
(f(y^n(t)-\ell\sin \theta^n(t))-f(y^n(t)+\ell\sin \theta^n(t)),\psi^n)\right]\Upsilon(t)\,dt .
\end{array}\right.$$

Since, by compactness,
$$f(y^n\pm \ell\sin \theta^n)\rightarrow f(y\pm\ell\sin \theta)\quad \text{in }C^0([0,T],L^2(0,\pi))\,,$$
by letting $n\rightarrow +\infty$ in the above system we conclude that $y_{tt}\in C^0([0,T];H^*)$ and $\theta_{tt}\in C^0([0,T];H^{-1})$.
The verification of the initial conditions follows by noting that $y^n(0)\rightarrow y(0)$ in $H^2 \cap H_0^1(0,\pi)$, $\dot{y}^n(0)\rightarrow \dot{y}(0)$ in $L^2(0,\pi)$,
$\theta^n(0)\rightarrow \theta(0)$ in $H_0^1(0,\pi)$ and $\dot{\theta}^n(0)\rightarrow \dot{\theta}(0)$ in $L^2(0,\pi)$. The proof of the existence part is complete, once we
observe that all the above results hold for any $T>0$.\par
Next we turn to the uniqueness issue. Since it follows by repeating the proof of \cite[Theorem 11]{holubova} with some minor changes we only give a sketch of it.
Assume problem \eq{system}-\eq{boundary}-\eq{initial} admits two couples of solutions $(y^1,\theta^1)$ and $(y^2,\theta^2)$ and denote $(\bar y,\bar \theta):=(y^1-y^2,\theta^1-\theta^2)$.
Next, we
put $\mu_s(t)=-\int_t^s \bar y(\tau)\,d\tau$, $\eta_s(t)=-\int_t^s \bar \theta(\tau)\,d\tau$, $Y(t)=\int_0^t \bar y(\tau)\,d\tau$ and
$\Theta(t)=\int_0^t \bar \theta(\tau)\,d\tau$ with $0<t\leq s$. Note that $\mu_s'(t)=\bar y(t)$, $\eta_s'(t)=\bar \theta(t)$, $\mu_s(t)=Y(t)-Y(s)$ and $\eta_s(t)=\Theta(t)-\Theta(s)$.
Multiply the equation satisfied by $\bar y$ times $\mu_s$ and the one satisfied by $\bar \theta$ times $\eta_s$. By integrating, one deduces
$$
\left \{ \begin{array}{llll}
\displaystyle{\frac{1}{2}\|\bar y(s)\|_{2}^2+\frac{1}{2}\|Y_{xx}(s)\|_2^2}=\\
\int_0^s \left[f(y^1-\ell\sin \theta^1)+f(y^1+\ell\sin \theta^1)-f(y^2-\ell\sin \theta^2)-f(y^2+\ell\sin \theta^2)\right]\mu_sdt\\
\displaystyle{\frac{\ell}{6}\|\bar \theta(s)\|_{2}^2+\frac{\ell}{6}\|\Theta_x(s)\|_2^2}=\\
\int_0^s \left[\cos(\theta_1)(f(y^1-\ell\sin \theta^1)-f(y^1+\ell\sin \theta^1))-\cos(\theta_2)(f(y^2-\ell\sin \theta^2)-f(y^2+\ell\sin \theta^2))\right]\eta_sdt.
\end{array}\right.
$$
Exploiting the fact that $f\in$ Lip$_{loc}(\R)$ and \eq{uniform bound}, one infers that
$$\|\bar y(s)\|_{2}^2+\|\bar \theta(s)\|_{2}^2+\|Y_{xx}(s)\|_2^2+\|\Theta_x(s)\|_2^2\leq
C \int_0^s\left(\|\bar y(t)\|_{2}^2+\|\bar \theta(t)\|_{2}^2+\|Y_{xx}(t)\|_2^2+\|\Theta_x(t)\|_2^2\right)\,dt$$
where both the Young and the Poincar\'e inequalities have been exploited. Hence, by the Gronwall Lemma, $\|\bar y(s)\|_{2}=\|\bar \theta(s)\|_{2}=0$ and uniqueness follows.

\section{Proof of Theorem \ref{epsneg}}\label{proofepsneg}

We start with a calculus lemma.

\begin{lemma}\label{maxcalc}
Let $a,b,c,d>0$ and let $A=\{(x,y)\in\R^2;\, 0<y<dx\, ,\ ax+y+by^2<c\}$. Let
$$K_1=\frac{2c^2d}{(a+d)^2+2bcd^2+(a+d)\sqrt{(a+d)^2+4bcd^2}}>0\, ,$$
$$K_2=\frac{2(1+3bc)^{3/2}-2-9bc}{27ab^2}>0\, .$$
Then
$$\sup_{(x,y)\in A}\ xy\ =\max\{K_1,K_2\}\, .$$
\end{lemma}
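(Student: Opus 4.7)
The plan is to treat this as a constrained maximization of the continuous function $f(x,y):=xy$ on the compact set $\overline{A}$. The hypotheses $a,b,c,d>0$ together with the constraint $ax+y+by^2\le c$ force $0\le x\le c/a$ and $0\le y\le\sqrt{c/b}$, so $\overline{A}$ is compact and the supremum is attained. Since $\nabla f=(y,x)$ vanishes only at the origin (where $f=0$) and $f\equiv 0$ on the boundary piece $\{y=0\}$, the maximum must lie on one of the two nontrivial boundary arcs: the segment $\Sigma_1:=\{y=dx\}\cap\overline{A}$ or the curve $\Sigma_2:=\{ax+y+by^2=c\}\cap\overline{A}$.

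First I would analyze $\Sigma_1$. On $y=dx$ the objective is $f=dx^2$, strictly increasing in $x>0$, so its maximum is achieved at the intersection $(x_1,dx_1)$ of the line with the curve, i.e.\ at the positive root $x_1$ of $bd^2x^2+(a+d)x-c=0$. Computing $dx_1^2$ via the quadratic formula and multiplying numerator and denominator by the conjugate $(a+d)+\sqrt{(a+d)^2+4bcd^2}$ (equivalently, using $bd^2x_1^2=c-(a+d)x_1$) rewrites $dx_1^2$ as $K_1$. Next I would analyze $\Sigma_2$ by parametrizing it as $x=(c-y-by^2)/a$, so that the objective becomes $h(y):=y(c-y-by^2)/a$. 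Its derivative is $h'(y)=(c-2y-3by^2)/a$, with unique positive root $y_*=(\sqrt{1+3bc}-1)/(3b)$. Using $3by_*^2=c-2y_*$ gives $h(y_*)=y_*(2c-y_*)/(3a)$, and the substitution $v:=\sqrt{1+3bc}$, together with the identities $3by_*=v-1$ and $9bc=3(v^2-1)$, massages this into $K_2$.

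To finish I would do the case analysis determined by whether the unconstrained critical point $(x_*,y_*)$ of $h$ is admissible, i.e.\ whether $y_*\le dx_*$. If yes, then $y_*$ lies in the admissible range $[0,dx_1]$ of $\Sigma_2$ and the max on $\Sigma_2$ is $K_2$; if no, then $h$ is still increasing at the endpoint $y=dx_1$, so the max on $\Sigma_2$ is $h(dx_1)=K_1$, matching the sup on $\Sigma_1$. In either case the supremum over $\overline{A}$ equals $\max\{K_1,K_2\}$. The main obstacle I expect is not the optimization geometry but the algebraic bookkeeping: manipulating the intermediate expressions into the precise closed forms displayed, especially rationalizing to produce the $(a+d)+\sqrt{(a+d)^2+4bcd^2}$ denominator in $K_1$ and reshaping the critical value into the $2(1+3bc)^{3/2}$ numerator in $K_2$.
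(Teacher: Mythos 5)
Your boundary-optimization strategy is essentially the paper's own: the sup of $xy$ on $\overline{A}$ must lie either at the corner $M$ where $y=dx$ meets the parabola (yielding $K_1$), or at the interior critical point of $h(y)=y(c-y-by^2)/a$ along the parabola (yielding $K_2$), and your algebra for $K_1$ and $K_2$ is correct. The gap is in the closing sentence, ``in either case the supremum over $\overline{A}$ equals $\max\{K_1,K_2\}$,'' which does not follow from your two cases and is in fact false. Observe first that $K_1\le K_2$ unconditionally: $K_2=h(y_*)$ is the global maximum of $h$ over the entire arc $y\ge 0$ of the parabola, and $M$ lies on that arc, so $K_1=h(dx_1)\le K_2$ and hence $\max\{K_1,K_2\}=K_2$. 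But in your ``no'' case ($y_*>dx_*$, equivalently $y_*>dx_1$) you correctly conclude $\sup_{\overline{A}}xy=K_1$, and there $K_1<K_2$ strictly because $h$ is strictly increasing on $(0,y_*)$; so the supremum is $K_1$, not $\max\{K_1,K_2\}=K_2$. A concrete instance: take $a=b=c=1$ and $d=10^{-2}$. Then every $(x,y)\in A$ has $y<10^{-2}x$ and $x<1$, so $\sup_A xy<10^{-2}$, whereas $K_2=5/27\approx 0.185$.

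This defect is not yours alone: the paper's own proof sketch likewise identifies the two candidate values and then asserts ``this proves the statement'' without addressing the regime in which the tangency point falls outside $\overline{A}$. What is actually true, and what is the only thing used downstream in the proof of Theorem \ref{epsneg} via \eqref{stimawm}, is the one-sided bound $\sup_{\overline{A}}xy\le\max\{K_1,K_2\}$; both your argument and the paper's do establish that inequality, since the supremum is one of the two values $K_1$ or $K_2$.
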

\begin{proof} The set $A$ is delimited by two segments and an arch of parabola, see Figure \ref{figura}.
\begin{figure}[ht]
\begin{center}
{\includegraphics[height=28mm, width=70mm]{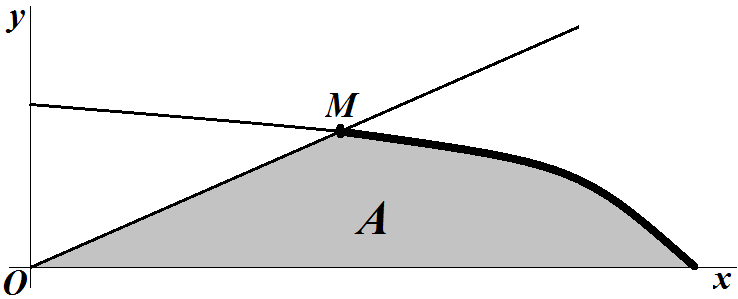}}
\caption{The set $A$.}\label{figura}
\end{center}
\end{figure}
In order to find the supremum of $xy$, we intersect the hyperbola $xy=k$ with the closure of $A$ and the maximum of $xy$ on such set belongs to the bold
face part of the boundary in Figure \ref{figura}. Whence, either it coincides with the point $M$ or with a point where a hyperbola $xy=k>0$ is tangent
to the parabola.\par
The coordinates of the point $M$ are given by
$$M\left(\frac{\sqrt{(a+d)^2+4bcd^2}-(a+d)}{2bd^2},\frac{\sqrt{(a+d)^2+4bcd^2}-(a+d)}{2bd}\right)$$
which belongs to the hyperbola $xy=K_1$.\par
The hyperbola tangent to the parabola is found by solving the following system:
$$xy=k\, ,\quad ax+y+by^2=c\, .$$
We have to determine for which $k$ this system admits a double solution. We find $k=K_2$.\par
Summarizing, the hyperbola having equation $xy=k$ intersect the bold face of the boundary in Figure \ref{figura} for $k=K_1$ if the intersection is
at the point $M$ and for $k=K_2$ if the hyperbola is tangent to the parabola. This proves the statement.\end{proof}

Then we recall a Gagliardo-Nirenberg \cite{gagliardo,nirenberg} inequality. Since we are interested in the value of the constant and since we were unable
to find one in literature, we also give its proof. We do not know if the optimal constant is indeed 1.

\begin{lemma}\label{symm}
For all $u\in H^1_0(0,\pi)$ we have $\|u\|_\infty^2\le\|u\|_2\, \|u'\|_2$.
\end{lemma}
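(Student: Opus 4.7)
The plan is to reduce the bound to a single Cauchy–Schwarz estimate by exploiting the boundary conditions of $u\in H^1_0(0,\pi)$. Choose $x_0\in[0,\pi]$ where $|u|$ attains its maximum, so that $|u(x_0)|=\|u\|_\infty$. Since $u(0)=0$, the fundamental theorem of calculus applied to $u^2$ yields
$$
u(x_0)^2=\int_0^{x_0}\frac{d}{dt}(u(t)^2)\,dt=2\int_0^{x_0}u(t)u'(t)\,dt,
$$
and, using instead $u(\pi)=0$,
$$
u(x_0)^2=-2\int_{x_0}^{\pi}u(t)u'(t)\,dt.
$$

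Next I would add these two identities. The two integrals combine into $2\int_0^{\pi}|u(t)u'(t)|\,dt$ after applying the triangle inequality, which gives the estimate
$$
2\,u(x_0)^2\le 2\int_0^\pi |u(t)||u'(t)|\,dt.
$$
A single application of the Cauchy–Schwarz inequality on $L^2(0,\pi)$ then bounds the right-hand side by $2\|u\|_2\|u'\|_2$. Dividing by $2$ yields $\|u\|_\infty^2=u(x_0)^2\le\|u\|_2\|u'\|_2$, which is the desired inequality. A minor verification is that the maximum of $|u|$ is attained: this follows from the embedding $H^1_0(0,\pi)\embed C^0([0,\pi])$, so $u$ is continuous on the compact interval $[0,\pi]$.

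There is essentially no obstacle here; the only subtle point is recognizing that writing $u(x_0)^2$ twice — once from each endpoint — and summing before applying Cauchy–Schwarz is what eliminates the usual factor of $2$ and produces the clean constant $1$. A single-sided estimate would only give $\|u\|_\infty^2\le 2\|u\|_2\|u'\|_2$, which is weaker. No further machinery (symmetrization, rearrangement, or explicit extremals) is needed for the stated bound, although these would likely be required to decide whether $1$ is indeed the optimal constant, a question the authors leave open.
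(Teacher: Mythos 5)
Your proof is correct, and it takes a genuinely different route from the paper. The paper invokes symmetrization: by the Almgren--Lieb result, rearrangement preserves the $L^q$-norm of $u$ and decreases $\|u'\|_2$, so one may assume $u$ is nonnegative, symmetric about $\pi/2$, and increasing on $[0,\pi/2]$; then $\|u\|_\infty^2 = u(\pi/2)^2 = 2\int_0^{\pi/2} u\,u' = \int_0^\pi |u\,u'| \le \|u\|_2\|u'\|_2$ by Cauchy--Schwarz. You instead write $u(x_0)^2$ twice via the fundamental theorem of calculus---once from the left endpoint and once from the right---add the two identities, and apply the triangle inequality before Cauchy--Schwarz; this averages away the factor of $2$ directly, with no appeal to rearrangement. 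Your argument is strictly more elementary (it needs only the $H^1_0 \hookrightarrow C^0$ embedding and Cauchy--Schwarz, not the Almgren--Lieb machinery), is fully self-contained, and reaches the same constant $1$. The paper's symmetrization route is more natural if one is also chasing the sharp constant, since it identifies the class of candidate extremals as symmetric decreasing functions---a question the authors explicitly leave open---but for the inequality as stated your approach is the cleaner one.
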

{\it Proof. } Since symmetrization leaves $L^q$-norms of functions invariant and decreases the $L^p$-norms of the derivatives, see e.g.\
\cite[Theorem 2.7]{AlmgrenLieb}, we may restrict our attention to functions which are symmetric, positive and decreasing with respect to the
center of the interval. If $u$ is one such function we have
$$\int_0^{\pi/2}u(\tau)u'(\tau)\, d\tau=\int_0^{\pi/2}|u(\tau)u'(\tau)|\, d\tau=\int_{\pi/2}^\pi|u(\tau)u'(\tau)|\, d\tau=\frac12
\int_0^\pi|u(\tau)u'(\tau)|\, d\tau\, .$$
Therefore, by the H\"older inequality we have
$$\|u\|_\infty^2=u\left(\frac{\pi}{2}\right)^2=\int_0^{\pi/2}[u(\tau)^2]'\, d\tau=2\int_0^{\pi/2}u(\tau)u'(\tau)\, d\tau=
\int_0^\pi|u(\tau)u'(\tau)|\, d\tau\le\|u\|_2\, \|u'\|_2\, .\eqno{\Box}$$

In particular, by Lemma \ref{symm}, we have
\neweq{gagnir}
\|w^m(t)\|_\infty^4\le\|w^m(t)\|_2^2\, \|w^m_x(t)\|_2^2\, .
\endeq

Note also that an improved Poincar\'e inequality yields
\neweq{poincare}
\|w^m(t)\|_2\le\frac{1}{m+1}\|w^m_x(t)\|_2\, .
\endeq

Moreover, by \eq{split}, we may also rewrite \eq{Entot} as
\begin{eqnarray}
E &=& \frac{\|y_t(t)\|_2^2}{2}+\frac{\|z^m_t(t)\|_2^2}{6}+\frac{\|w^m_t(t)\|_2^2}{6}+
\frac{\|y_{xx}(t)\|_2^2}{2}+\frac{\|z^m_x(t)\|_2^2}{6}+\frac{\|w^m_x(t)\|_2^2}{6} \notag\\
\ & \ & +\|y(t)\|_2^2+\|z^m(t)\|_2^2+\|w^m(t)\|_2^2+\frac{\|y(t)\|_4^4}{2}+\frac{\|z(t)\|_4^4}{2}+3\int_0^\pi y(x,t)^2z(x,t)^2\, dx \notag\\
\mbox{(H\"older) } &> & \frac{\|w^m_x(t)\|_2^2}{6}+\|w^m(t)\|_2^2+\frac{\|z(t)\|_2^4}{2\pi} \notag\\
\mbox{by \eq{split} } &> & \frac{\|w^m_x(t)\|_2^2}{6}+\|w^m(t)\|_2^2+\frac{\|w^m(t)\|_2^4}{2\pi} \, .\label{E1}
\end{eqnarray}

The two constraints \eq{poincare} and \eq{E1} are as in Lemma \ref{maxcalc} with $x=\|w^m_x(t)\|_2^2$, $y=\|w^m(t)\|_2^2$, and
$$a=\frac16 \, ,\quad b=\frac{1}{2\pi}\, ,\quad c=E\, ,\quad d=\frac{1}{(m+1)^2}\, .$$
Therefore, Lemma \ref{maxcalc} combined with \eq{gagnir} states that
\neweq{stimawm}
\|w^m(t)\|_\infty^4\le\max\{K_1,K_2\}
\endeq
where
$$K_1=\frac{72\, \pi\, E^2\, (m+1)^2}{\ \pi(m^2+2m+7)^2+36E+(m^2+2m+7)\sqrt{\pi^2(m^2+2m+7)^2+72\pi E}\ }\, ,$$
$$K_2=\frac{16\, \pi^2}{9}\left[\left(1+\frac{3E}{2\pi}\right)^{3/2}-1\right]-4\, \pi\, E\, .$$

Then \eq{stimawm} yields $\|w^m(t)\|_\infty\le\omega$, provided either $K_1\le\omega^4$ or $K_2\le\omega^4$. The first occurs whenever
\eq{prima} holds whereas the second case occurs whenever \eq{seconda} holds.

\section{Proof of Theorem \ref{stable}}\label{5}

For any $E>0$ we put
$$\Lambda_{\pm}(E):=2\sqrt{1+\frac{2}{3}E}\pm 2.$$
Then we prove

\begin{lemma}\label{periodicT}
For any $\alpha,\beta\in\R$ problem \eqref{soloy} admits a unique solution $\overline{y}$ which is periodic of period
\neweq{TE}
T(E)=\frac{8}{\sqrt3 }\int_0^1\frac{ds}{\sqrt{(\Lambda_+(E)+\Lambda_-(E)s^2)(1-s^2)}}\, .
\endeq
In particular, the map $E\mapsto T(E)$ is strictly decreasing and $\lim_{E\to0}T(E)=2\pi/\sqrt3$.
\end{lemma}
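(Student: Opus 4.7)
\medskip

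\noindent\textbf{Proof plan.} The equation \eqref{soloy} is a conservative second-order ODE with locally Lipschitz (indeed polynomial) right-hand side, so Cauchy--Lipschitz gives local existence and uniqueness. The conserved quantity \eqref{energyy} involves the strictly convex coercive potential $V(y)=\frac{3}{2}y^{2}+\frac{3}{8}y^{4}$, so $\overline y$ stays bounded and the solution extends to all of $\R$. Since $(0,0)$ is the unique critical point of the Hamiltonian and $V$ is strictly convex with $V(0)=0$, every level set $\{\frac{1}{2}\dot y^{2}+V(y)=E\}$ with $E>0$ is a simple closed curve encircling the origin; thus $\overline y$ is periodic and its period equals the time needed to traverse this curve.

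\medskip

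\noindent Next I would derive \eqref{TE}. Writing $\dot y^{2}=2E-3y^{2}-\frac{3}{4}y^{4}$ and exploiting the symmetry of the orbit about both axes,
\[
T(E)=4\int_{0}^{A(E)}\frac{dy}{\sqrt{2E-3y^{2}-\frac{3}{4}y^{4}}}\,,
\]
where $A(E)>0$ is the positive root of $\frac{3}{4}y^{4}+3y^{2}-2E=0$. Solving this biquadratic I get $A(E)^{2}=\Lambda_{-}(E)$. Viewing $\frac{3}{4}y^{4}+3y^{2}-2E$ as a quadratic in $y^{2}$, its roots are $\Lambda_{-}(E)$ and $-\Lambda_{+}(E)$ (the product and sum check out: $\Lambda_{-}(E)\cdot(-\Lambda_{+}(E))=-\frac{8E}{3}$ and $\Lambda_{-}(E)-\Lambda_{+}(E)=-4$). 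Therefore
\[
2E-3y^{2}-\tfrac{3}{4}y^{4}=\tfrac{3}{4}\bigl(\Lambda_{-}(E)-y^{2}\bigr)\bigl(y^{2}+\Lambda_{+}(E)\bigr).
\]
The substitution $y=\sqrt{\Lambda_{-}(E)}\,s$ then transforms the quarter-period integral into \eqref{TE}.

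\medskip

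\noindent For monotonicity, I would argue directly from \eqref{TE}: both $\Lambda_{+}$ and $\Lambda_{-}$ are strictly increasing functions of $E$, so for $s\in(0,1)$ the integrand $[(\Lambda_{+}(E)+\Lambda_{-}(E)s^{2})(1-s^{2})]^{-1/2}$ is strictly decreasing in $E$; dominated convergence (the singularity at $s=1$ is integrable uniformly in $E$ on any bounded range) gives $E\mapsto T(E)$ strictly decreasing. Finally, the limit as $E\to 0^{+}$ follows by passing to the limit in \eqref{TE}, where $\Lambda_{-}(E)\to 0$ and $\Lambda_{+}(E)\to 4$, so
\[
\lim_{E\to 0}T(E)=\frac{8}{\sqrt 3}\int_{0}^{1}\frac{ds}{2\sqrt{1-s^{2}}}=\frac{4}{\sqrt 3}\cdot\frac{\pi}{2}=\frac{2\pi}{\sqrt 3}\,,
\]
consistent with the linearized period $2\pi/\sqrt{3}$ of $\ddot y+3y=0$.

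\medskip

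\noindent The only mildly delicate step is the algebraic factorization of the quartic and the verification that the two roots of the associated quadratic in $y^{2}$ are exactly $\Lambda_{\pm}(E)$; everything else is routine energy-method bookkeeping plus an elementary substitution. Interchanging limit and integral at $E\to 0$ needs a short justification because $\Lambda_{-}(E)\to 0$ causes the second factor under the square root to degenerate at $s=0$, but this is harmless since $\Lambda_{+}(E)\to 4>0$ keeps the integrand bounded away from a singularity at $s=0$.
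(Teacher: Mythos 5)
Your proof is correct and follows essentially the same route as the paper: factor the quartic under the square root via the roots $\Lambda_-(E)$ and $-\Lambda_+(E)$ of the associated quadratic in $y^2$, substitute $y=\sqrt{\Lambda_-(E)}\,s$ to obtain \eqref{TE}, and deduce monotonicity and the $E\to0$ limit from the monotonicity and limits of $\Lambda_\pm$. Your explicit appeal to dominated convergence for the limit and your pointwise-monotonicity argument for $T$ are slightly more detailed than the paper's, but the substance is identical.
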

\begin{proof} The existence, uniqueness and periodicity of the solution $\overline{y}$ is a known fact from the theory of ODE's. For a given $E>0$,
we may rewrite \eq{energyy} as
\neweq{ancoraenergia}
\dot{y}^2=2E-3y^2-\frac34 y^4\,.
\endeq
Hence,
\begin{equation}\label{y1sup}
\|\overline{y}\|_\infty=\sqrt{{\Lambda_-(E)}}\,.
\end{equation}
Since \eq{soloy} merely consists of odd terms, the period $T(E)$ of $\overline{y}$ is the double of the width of an interval of monotonicity for $\overline{y}$.
Since the problem is autonomous, we may assume that $\overline{y}(0)=-\|\overline{y}\|_\infty$ and $\dot{\overline{y}}(0)=0$; then, by symmetry and periodicity,
we have that
$\overline{y}(T/2)=\|\overline{y}\|_\infty$ and $\dot{\overline{y}}(T/2)=0$. By rewriting \eq{ancoraenergia} as
$$\dot{y}=\frac{\sqrt{3}}{2}\sqrt{(\Lambda_+(E)+y^2)(\Lambda_-(E)-y^2)}\qquad\forall t\in\left(0,\frac{T}{2}\right)\, ,$$
by separating variables, and upon integration over the time interval $(0,T/2)$ we obtain
$$\frac{T(E)}{2}= \frac{2}{\sqrt{3}} \int_{-\|\overline{y}\|_\infty}^{\|\overline{y}\|_\infty}\frac{dy}{\sqrt{(\Lambda_+(E)+y^2)(\Lambda_-(E)-y^2)}}\,.$$
Then, using the fact that the integrand is even with respect to $y$ and through a change of variable,
$$T(E)=\frac{8}{\sqrt{3}}\int_0^{\|\overline{y}\|_\infty}\frac{dy}{\sqrt{(\Lambda_+(E)+y^2)(\|\overline{y}\|_\infty^2-y^2)}}=\frac{8}{\sqrt3 }\int_0^1\frac{ds}{\sqrt{(\Lambda_+(E)+\Lambda_-(E)s^2)(1-s^2)}}\,,$$
which proves \eq{TE}. Both the maps $E \mapsto \Lambda_\pm(E)$ are continuous and increasing for $E\in[0,\infty)$ and $\Lambda_-(0)=0$, $\Lambda_+(0)=4$.
Whence, $E\mapsto T(E)$ is strictly decreasing and
$$\lim_{E\to0}T(E)=T(0)=\frac{4}{\sqrt3 }\int_0^1\frac{ds}{\sqrt{1-s^2}}=\frac{2\pi}{\sqrt3}\, ,$$
a result that could have also been obtained by noticing that, as $E\to0$, the equation \eq{soloy} tends to $\ddot{y}+3y=0$.\end{proof}

In the sequel, we need bounds for $T(E)$. From \eq{TE} we see that, by taking $s=0$ in the first polynomial under square root,
\neweq{upper}
T(E)\le\frac{8}{\sqrt{3\Lambda_+(E)}}\int_0^1\frac{ds}{\sqrt{1-s^2}}=\frac{4\pi}{\sqrt{3\Lambda_+(E)}}\ \Longrightarrow\ \frac{16\pi^2}{T(E)^2}\ge3\Lambda_+(E)\, .
\endeq
Moreover, by taking $s=1$ in the first polynomial under square root, we infer
\neweq{lower}
T(E)\ge\frac{8}{\sqrt{3}\sqrt{\Lambda_+(E)+\Lambda_-(E)}}\int_0^1\frac{ds}{\sqrt{1-s^2}}=\frac{2\pi}{\sqrt[4]{9+6E}}\ \Longrightarrow\
\frac{4\pi^2}{T(E)^2}\le\sqrt{9+6E}\, .
\endeq

Let us now consider \eq{nonlinearhill}. With the initial conditions $\xi(0)=\dot{\xi}(0)=0$, the unique solution of \eq{nonlinearhill} is $\xi\equiv0$. We are
interested in determining whether the trivial solution is stable in the Lyapunov sense, namely if the solutions of \eq{nonlinearhill} with small initial
data $|\xi(0)|$ and $|\dot{\xi}(0)|$ remain small for all $t\ge0$. By Lemma \ref{periodicT}, the function $\overline{y}(t)^2$ is $T/2$-periodic.
Then $a$ is a positive $T/2$-periodic function and a stability criterion for the Hill equation due to Zhukovskii \cite{zhk}, see also
\cite[Chapter VIII]{yakubovich}, states that the trivial solution of \eq{nonlinearhill} is stable provided that
\begin{equation}\label{stabper}
\frac{4\pi^2}{T(E)^2}\leq a(t)\leq \frac{16\pi^2}{T(E)^2}\, .
\end{equation}
Let us translate this condition
in terms of $\|\overline{y}\|_\infty$. By the definition of $a$ in \eq{nonlinearhill} and by \eq{y1sup}, we have
$$
7\le a(t)\le7+\frac{27}{2}\|\overline{y}\|_\infty^2=-20+9\sqrt{9+6E}\, .
$$
Whence, \eq{stabper} holds if both
\neweq{piusuff}
\frac{4\pi^2}{T(E)^2}\leq7\quad \text{and}\quad -20+9\sqrt{9+6E}\leq \frac{16\pi^2}{T(E)^2}\, .
\endeq
In turn, by \eq{upper}-\eq{lower}, the inequalities in \eq{piusuff} certainly hold if
$$\sqrt{9+6E}\le7\quad \text{and}\quad  -20+9\sqrt{9+6E}\le3\Lambda_+(E)\ .$$
The first of such inequalities is fulfilled provided that $E\le\frac{20}{3}$; the second inequality is satisfied if
\neweq{suff}
E\le\frac{235}{294}\approx0.799\ ,\qquad \|\overline{y}\|_\infty\le\sqrt{\frac{10}{21}}\approx0.69\ ,
\endeq
which is more stringent and, therefore, yields a sufficient condition for \eq{stabper} to hold. This proves Theorem \ref{stable}.

\begin{remark} {\em The sufficient condition \eqref{stabper} is fulfilled as long as both \eqref{piusuff} hold.
Numerically, we see that the former is satisfied for $E\lessapprox10.445$ whereas the latter is satisfied for $E\lessapprox0.944$.
The most stringent is the second one which corresponds to $\|\overline{y}\|_\infty\lessapprox0.74$, not significantly better than \eqref{suff}.}\end{remark}

\section{Proof of Theorem \ref{stable2}}\label{22}

For any $E>0$, we put
$$\Lambda_{\pm}^j(E)=2\sqrt{\frac{(j^4+2)^2}{9}+\frac{2}{3}E}\, \pm\, \frac{2}{3}\, (j^4+2)\qquad(j=1,2)\,.$$
Then \eq{energyEj}, with $E_j=E$, reads
$$\dot{y_j}^2=\frac{3}{4}(\Lambda_{+}^j(E)+y_j^2)(\Lambda_{-}^j(E)-y_j^2)\qquad (j=1,2)\,.$$
By this, since any $j$-th vertical mode $\overline{y}_j$ satisfies \eq{energyEj}, we deduce
\begin{equation}\label{yisupn}
\|\overline{y}_j\|_\infty=\sqrt{{\Lambda_{-}^j(E)}}\qquad (j=1,2)\,.
\end{equation}
Then, the same analysis performed in Section \ref{5} yields that the $\overline{y}_j$ are periodic functions of period
$$
T_j(E)=\frac{8}{\sqrt3 }\int_0^1\frac{ds}{\sqrt{(\Lambda_+^j(E)+\Lambda_-^j(E)s^2)(1-s^2)}}\, .
$$
In particular, the map $E\mapsto T_j(E)$ is strictly decreasing and $\lim_{E\to0}T_j(E)=2\pi/\sqrt{j^4+2}$. Furthermore, the following estimates hold
\neweq{upperin}
T_j(E)\le\frac{8}{\sqrt{3\Lambda^j_+(E)}}\int_0^1\frac{ds}{\sqrt{1-s^2}}=\frac{4\pi}{\sqrt{3\Lambda^j_+(E)}}\ \Longrightarrow\ \frac{16\pi^2}{T_j(E)^2}\ge3\Lambda^j_+(E)\,
\endeq
and
\neweq{lowerin}
T_j(E)\ge\frac{8}{\sqrt{3}\sqrt{\Lambda^j_+(E)+\Lambda^j_-(E)}}\int_0^1\frac{ds}{\sqrt{1-s^2}}=\frac{2\pi}{\sqrt[4]{(j^4+2)^2+6E}}\ \Longrightarrow\
\frac{4\pi^2}{T_j(E)^2}\le\sqrt{(j^4+2)^2+6E}\, .
\endeq
Consider the first mode $(Y_1,0)=(\overline{y}_1,0,0,0)$. For $j=1$ the system \eq{nonlinearhillsystem2} reads
\neweq{nonlinearhillsystem21}
\left \{ \begin{array}{ll}
\ddot{\xi_1}(t)+(7+\frac{27}{2}\overline{y}_1(t)^2)\xi_1(t)=0\\
\ddot{\xi_2}(t)+(10+9\overline{y}_1(t)^2)\xi_2(t)=0\,.
\end{array}\right.
\endeq
If the trivial solution of both the equations in \eq{nonlinearhillsystem21} is stable then system \eq{nonlinearhillsystem21} itself is stable and
Definition \ref{newdeff} is satisfied, see \cite[Theorem II-Chapter III-vol 1]{yakubovich}. Since the first equation in \eq{nonlinearhillsystem21}
coincides with \eq{nonlinearhill}, the proof of Theorem \ref{stable} yields the torsional stability provided that \eq{suff} holds. For the second equation in
\eq{nonlinearhillsystem21}, by applying again the Zhukovskii stability criterion \eq{stabper}, we see that the trivial solution is stable provided that
$$\frac{4\pi^2}{T(E)^2}\leq 10+9\overline{y}_1(t)^2\leq \frac{16\pi^2}{T(E)^2} \,.$$
By arguing as in the proof of Theorem \ref{stable}, see \eq{piusuff}, we reach the bounds \eq{bounds2m} which are more stringent than \eq{suff}. These are
the bounds for the torsional stability of the first vertical mode.\par
For the second vertical mode $(Y_2,0)=(0,\overline{y}_2,0,0)$ we proceed similarly, but now system \eq{nonlinearhillsystem2} reads
\neweq{nonlinearhillsystem22}
\left \{ \begin{array}{ll}
\ddot{\xi_1}(t)+(7+9\overline{y}_2(t)^2)\xi_1(t)=0\\
\ddot{\xi_2}(t)+(10+\frac{27}{2}\overline{y}_2(t)^2)\xi_2(t)=0\,.
\end{array}\right.
\endeq
Concerning the first equation, a different stability criterion for the Hill equation due to Zhukovskii \cite{zhk}, see also \cite[Chapter VIII]{yakubovich},
states that the trivial solution is stable provided that
\neweq{ancoraa}
0\leq 7+9\overline{y}_2(t)^2 \leq \frac{4\pi^2}{T_2(E)^2}\, .
\endeq
The left inequality is always satisfied while the second inequality is satisfied if $7+9\|\overline{y}_2\|_\infty^2\le \frac{4\pi^2}{T_2(E)^2}$.
Whence, by \eq{upperin} a sufficient condition for the stability is
\neweq{less}
7+9\|\overline{y}_2\|_\infty^2\le\frac34 \Lambda_+^2(E)\ \Longleftrightarrow\ E\le\frac{38}{3}\,,\ \|\overline{y}_2\|_\infty\le\frac{2}{\sqrt3 }\ .
\endeq

Next we focus on the second equation in \eq{nonlinearhillsystem22}. The stability of the trivial solution is ensured if
$0\leq 10+\frac{27}{2}\overline{y}_2(t)^2 \leq \frac{4\pi^2}{T_2(E)^2}$, that is, if $10+\frac{27}{2}\|\overline{y}_2\|_\infty^2\le \frac{4\pi^2}{T_2(E)^2}$.
Whence, by \eq{upperin} with $j=2$, a sufficient condition for the stability is
\neweq{less2}
10+\frac{27}{2}\|\overline{y}_2\|_\infty^2\le\frac34 \Lambda_+^2(E)\ \Longleftrightarrow\ E\le\frac{5024}{867}\,,\
\|\overline{y}_2\|_\infty\le\sqrt{\frac{32}{51}}\ .
\endeq
This is more restrictive than \eq{less} and is therefore a sufficient condition for the stability of the second vertical mode $\overline{y}_2$.
The proof of Theorem \ref{stable2} is now complete.

\section{Appendix}\label{calculus lemma}

In order to determine the projected system, we need to multiply by $\sin(jx)$ ($j\in\N$) the equations in \eq{fcon1} and then integrate over $(0,\pi)$.
Therefore, we intensively exploit the following\renewcommand{\arraystretch}{1.1}

\begin{lemma}\label{calculus}
For all $k\in\N$ we have
$$c_{k,k,k}=\frac{8}{\pi}\int_0^\pi\sin^4(kx)\, dx=3\, .$$
For all $l,k\in\N$ ($l\neq k$) we have
$$
c_{l,k,k}=\frac{8}{\pi}\int_0^\pi\sin^3(kx)\sin(lx)\, dx=\left\{\begin{array}{ll}-1\ &\mbox{if }l=3k\\
0\ &\mbox{if }l\neq3k\, .
\end{array}\right.
$$
For all $l,k\in\N$ ($l\neq k$) we have
$$
c_{l,l,k}=\frac{8}{\pi}\int_0^\pi\sin^2(kx)\sin^2(lx)\, dx=2\, .
$$
For all $l,j,k\in\N$ (all different and $l<j$) we have
$$
c_{l,j,k}=\frac{8}{\pi}\int_0^\pi\sin^2(kx)\sin(jx)\sin(lx)\, dx=\left\{\begin{array}{ll}1\ &\mbox{if }j+l=2k\\
-1\ &\mbox{if }j-l=2k\\
0\ &\mbox{if }j\pm l\neq2k\, .
\end{array}\right.
$$
\end{lemma}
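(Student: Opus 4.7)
\textbf{Proof proposal for Lemma \ref{calculus}.}
All four identities are elementary trigonometric integrals, and the unifying strategy is to linearize the products of sines and cosines via the product-to-sum formulas
\[
\sin^2(nx)=\tfrac{1-\cos(2nx)}{2},\qquad \sin(ax)\sin(bx)=\tfrac{1}{2}\bigl[\cos((a-b)x)-\cos((a+b)x)\bigr],
\]
\[
\cos(ax)\cos(bx)=\tfrac{1}{2}\bigl[\cos((a-b)x)+\cos((a+b)x)\bigr],
\]
and then to exploit the elementary fact that $\int_0^\pi\cos(nx)\,dx$ equals $\pi$ if $n=0$ and $0$ if $n\in\mathbb{Z}\setminus\{0\}$. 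I treat the four cases in order.

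For $c_{k,k,k}$, I square $\sin^2(kx)=\tfrac{1-\cos(2kx)}{2}$ and linearize the resulting $\cos^2(2kx)$ once more, obtaining $\sin^4(kx)=\tfrac{3}{8}-\tfrac{1}{2}\cos(2kx)+\tfrac{1}{8}\cos(4kx)$; integrating on $(0,\pi)$ yields $3\pi/8$, which multiplied by $8/\pi$ gives $3$. For $c_{l,k,k}$ with $l\neq k$, I use the identity $\sin^3(kx)=\tfrac{3\sin(kx)-\sin(3kx)}{4}$, so the integrand becomes a combination of $\sin(kx)\sin(lx)$ and $\sin(3kx)\sin(lx)$; orthogonality of $\{\sin(nx)\}$ on $(0,\pi)$ eliminates both contributions unless $l=3k$, in which case only the second term survives and contributes $-\pi/8$.

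For $c_{l,l,k}$ with $l\neq k$, I expand $\sin^2(kx)\sin^2(lx)=\tfrac{1}{4}(1-\cos(2kx))(1-\cos(2lx))$; after distributing and linearizing the product $\cos(2kx)\cos(2lx)$, every nonconstant cosine integrates to zero because $k\neq l$ forces $2(k\pm l)\neq 0$, so only the constant term $\tfrac{1}{4}$ contributes, giving $\pi/4$ and hence $c_{l,l,k}=2$.

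The fourth case $c_{l,j,k}$ with $l,j,k$ pairwise distinct and $l<j$ is the bookkeeping-heavy one and I expect it to be the only potential pitfall. I write $\sin(jx)\sin(lx)=\tfrac{1}{2}[\cos((j-l)x)-\cos((j+l)x)]$ and $\sin^2(kx)=\tfrac{1-\cos(2kx)}{2}$, multiply them out, and linearize the two products $\cos(2kx)\cos((j\mp l)x)$ into cosines with frequencies $2k\pm(j-l)$ and $2k\pm(j+l)$. Each such cosine integrates to zero unless its frequency vanishes; since $j>l>0$ and $k>0$, the only possibilities are $2k-(j-l)=0$ (i.e.\ $j-l=2k$) and $2k-(j+l)=0$ (i.e.\ $j+l=2k$), and these are mutually exclusive. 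Tracking signs carefully through the expansion yields the contributions $-\pi/8$ when $j-l=2k$ and $+\pi/8$ when $j+l=2k$, which after multiplication by $8/\pi$ produces exactly the three-case formula stated in the lemma. The only real care needed is to check that all the other candidate zero-frequency cosines are ruled out by the sign conditions $l<j$ and $k\geq 1$, which is immediate.
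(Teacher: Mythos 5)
Your proposal is correct and follows essentially the same route as the paper: linearize $\sin^4$ and $\sin^3$, expand $\sin^2(kx)\sin^2(lx)$ via half-angle formulas, and handle the fourth integral with the product-to-sum (prostaphaeresis) identities plus orthogonality of $\{\cos(nx)\}$ on $(0,\pi)$. Your sign bookkeeping in the fourth case ($-\pi/8$ for $j-l=2k$, $+\pi/8$ for $j+l=2k$) is accurate, and your remark that the degenerate frequencies are excluded by $l<j$ and $k\ge 1$ is exactly the check needed.
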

\begin{proof} By linearization we know that\vskip-1mm
$$\sin^4\theta=\frac{\cos(4\theta)-4\cos(2\theta)+3}{8}\, ,\quad\sin^3\theta=\frac{3\sin\theta-\sin(3\theta)}{4}\qquad\forall\theta\in\R\, .$$\vskip-1mm
Therefore, we may readily compute the two integrals\vskip-1mm
$$\int_0^\pi\sin^4(kx)\, dx=\int_0^\pi\frac{\cos(4kx)-4\cos(2kx)+3}{8}\, dx=\frac38\, \pi\, ,$$\vskip-1mm
\begin{eqnarray*}
\int_0^\pi\sin^3(kx)\sin(lx)\, dx &=& \int_0^\pi\sin(lx)\frac{3\sin(kx)-\sin(3kx)}{4}\, dx\\
\ &=&-\frac14 \int_0^\pi\sin(lx)\sin(3kx)\, dx=\left\{\begin{array}{ll}-\frac{\pi}8\ &\mbox{if }l=3k\\
0\ &\mbox{if }l\neq3k\, .
\end{array}\right.
\end{eqnarray*}
Next, we compute\vskip-1mm
$$
\int_0^\pi\sin^2(kx)\sin^2(lx)\, dx=\frac14 \int_0^\pi(1-\cos(2kx))(1-\cos(2lx))\, dx=\frac{\pi}{4}\, .
$$\vskip-1mm
Finally, by using the prostaphaeresis formula, we get\vskip-1mm
$$
\int_0^\pi\sin^2(kx)\sin(jx)\sin(lx)\, dx = \frac12 \int_0^\pi(1-\cos(2kx))\sin(jx)\sin(lx)\, dx
$$
\vskip-5mm
$$
=\frac14 \int_0^\pi\cos(2kx)[\cos((j-l)x)+\cos((j+l)x)]\, dx=
\left\{\begin{array}{ll}\frac{\pi}8\ &\mbox{if }j+l=2k\\
-\frac{\pi}8\ &\mbox{if }j-l=2k\\
0\ &\mbox{if }j\pm l\neq2k\, .
\end{array}\right.
$$\vskip-1mm
The proof of the four identities is so complete.\end{proof}

Lemma \ref{calculus} allows to compute the coefficients of problems \eq{inf syst2} and \eq{systemU}. For the latter, one needs to compute coefficients of the kind
$c_{l,k,k}$. If $m\ge3$, since these coefficients do not vanish when $l=3k$, \eq{systemU} with initial conditions \eq{infork} may admit solutions $(Y,0)$ which have
more than one nontrivial component within the vector $Y$. Then the study of the stability of the $k$-th vertical mode, see Definition \ref{oscillmode},
cannot be reduced to that of a single Hill equation as in the cases $m=1$ and $m=2$. One obtains, instead, a system of coupled equations. The problem of the stability
for general $m\ge3$ will be addressed in \cite{bergazpie}.

\end{document}